\theoremstyle{plain}
\newtheorem{theorem}{Theorem}
\newtheorem{corollary}{Corollary}
\newtheorem{proposition}{Proposition}
\newtheorem{remark}{Remark}
\begin{document}

\date{\today}

\title{On the Ambartzumian-Pleijel identity in hyperbolic geometry}
  \author{Binbin XU \\
\small \em Institut Fourier\\
\small \em 100 Rue Des Maths \\
\small \em 38402 Saint-Martin-d'H\`eres cedex, France\\
\small \em e-mail: {\tt binbin.xu@ujf-grenoble.fr}\\
}

\maketitle
\begin{abstract}
We describe a hyperbolic version of the Ambartzumian-Pleijel identity. We use this identity to prove the hyperbolic Crofton formula and the hyperbolic isoperimetric inequality. This identity also provides a way to compute the chord length distribution for an ideal polygon in the hyperbolic plane. The analogous results for a maximally symmetric, simply connected, $2-$dimensional Riemannian manifold with constant sectional curvature are given in the end.
\end{abstract}

\section{Introduction}
In \cite{plei}, Pleijel discovered a family of identities associated with isoperimetric inequalities for planar convex domains with $C^1$ boundary. In \cite{amba1}, by generalizing these identities, Ambartzumian gave the Pleijel identity for the Euclidean plane. In \cite{amba}, Ambartzumian gave a combinatorial proof of the Pleijel identity. Moreover, he proved a general version of Pleijel identity for convex compact polygonal planar domains which we call the Ambartzumian-Pleijel identity. Later in \cite{cabo}, Cabo gave another approach to this identity via Stokes' theorem. In \cite{amba}, Ambartzumian also pointed out that these two identities can be used to find chord length distributions for planar convex domains using $\delta-$formalism. 

More precisely, let $\mathbb{E}$ denote the Euclidean plane. Let $\mathcal{G}_\mathbb{E}$ denote the set of geodesics in $\mathbb{E}$ and let $\mu_\mathbb{E}$ denote the measure on $\mathcal{G}_\mathbb{E}$ invariant under the Euclidean motions. Let $D$ be a compact convex domain in $\mathbb{E}$ and consider the subset $\mathcal{G}_D$ of $\mathcal{G}_\mathbb{E}$ consisting of all geodesics intersecting $D$. For each $\gamma\in \mathcal{G}_D$, the intersection $\gamma\cap D$ is called a \textit{chord} of $\gamma$ with respect to $D$. Let $\rho_D(\gamma)$ denote the chord length of $\gamma$ with respect to $D$. 

Assume the boundary $\partial D$ of $D$ is $C^1$. Let $\alpha_1(\gamma)$ and $\alpha_2(\gamma)$ be the two angles between the boundary $\partial D$ and the chord of $\gamma$ lying on the same side of $\gamma$.
\begin{center}
\includegraphics[scale=0.7]{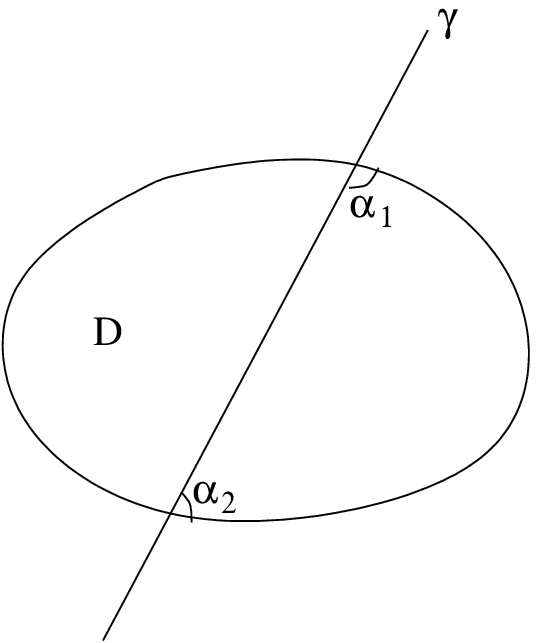}
\end{center}
With this notation, the Pleijel identity is as follows:
\begin{equation*}
 \int_{\mathcal{G}_{D}}(f\circ\rho_D)\,{\rm d}\mu_\mathbb{E}=\int_{\mathcal{G}_{D}}(f'\circ\rho_D)\rho_D\cot\alpha_1\cot\alpha_2 \,{\rm d}\mu_\mathbb{E},
\end{equation*}
where $f\in C^1(\mathbb{R},\mathbb{R})$ with $f(0)=0$.
 
This identity implies the isoperimetric inequality $L(\partial D)^2-4\pi A(D)\ge 0$, where $L(\partial D)$ is the boundary length of $D$ and $A(D)$ is the area of $D$.

Instead of being $C^1$, now assume that $\partial D$ is a polygon consisting of $n$ geodesic edges $a_1,\dots,a_n$. The Ambartzumian-Pleijel identity for $D$ and any $C^1-$function $f$ is as follows:
\begin{equation*}
 \int_{\mathcal{G}_{D}}(f\circ\rho_D)\,{\rm d}\mu_\mathbb{E}=\int_{\mathcal{G}_{D}}(f'\circ\rho_D)\rho_D\cot\alpha_1\cot\alpha_2 \,{\rm d}\mu_\mathbb{E}+\sum^n_{i=1}\int^{|a_i|}_0f(x)\,{\rm d}x,
\end{equation*}
where $|a_j|$ is the length of $a_j$, and ${\rm d}x$ is the Euclidean length element on $\mathbb{R}$. 

In this paper, we prove the hyperbolic counterpart of the Ambartzumian-Pleijel identity. Let $D$ be a compact convex domain in the hyperbolic plane $\mathbb{H}$ with geodesic polygon boundary whose edges are denoted by $a_1,\dots,a_n$. Let $\mathcal{G}_{\mathbb{H}}$ denote the set of geodesics in $\mathbb{H}$ and $\mu$ denote the Liouville measure on $\mathcal{G}_{\mathbb{H}}$. Let $\mathcal{G}_D$ be the subset of $\mathcal{G}_{\mathbb{H}}$ consisting of geodesics intersecting $D$. Our main result is:
\begin{theorem}\label{1}
 Let $f$ be in $C^1(\mathbb{R};\mathbb{R})$. Then we have the following hyperbolic version of the Ambartzumian-Pleijel identity:
\begin{equation*}
 \int_{\mathcal{G}_{D}}(f\circ\rho_D)\,{\rm d}\mu=\int_{\mathcal{G}_{D}}(f'\circ\rho_D)\sinh\rho_D\cot\alpha_1\cot\alpha_2 \,{\rm d}\mu+\frac{1}{2}\sum^n_{i=1}\int^{|a_i|}_0f(x)\,{\rm d}x,
\end{equation*}
where ${\rm d}x$ is the length element on $\mathbb{R}$, and $|a_i|$ is the hyperbolic length of the i-th boundary segment $a_i$ of $D$.
\end{theorem}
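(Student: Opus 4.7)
The plan is to pull the integrals back from $\mathcal{G}_D$ to $\partial D\times\partial D$ using the arclength coordinates $(s_1,s_2)$ of the two endpoints of a chord, and then to perform a single integration by parts. Away from the $\mu$-null subset of chords contained in a single edge, the map $(s_1,s_2)\mapsto\gamma$ is a $2$-to-$1$ cover $\mathcal{D}:=\bigcup_{i\neq j}a_i\times a_j\to\mathcal{G}_D$. A Jacobian calculation based on the hyperbolic law of sines, applied to the infinitesimal triangle $P_1 P_2 P_2'$ (with $P_2'$ a perturbation of $P_2$ along $\partial D$), yields $\partial\alpha_1/\partial s_2=\sin\alpha_2/\sinh\rho_D$, and hence on $\mathcal{D}$
\[
d\mu \;=\; c\,\frac{\sin\alpha_1\sin\alpha_2}{\sinh\rho_D}\,ds_1\,ds_2 \qquad\text{and}\qquad \frac{\partial^2\rho_D}{\partial s_1\,\partial s_2}\;=\;\frac{\sin\alpha_1\sin\alpha_2}{\sinh\rho_D},
\]
where the normalization $c$ is ultimately fixed by the hyperbolic Crofton formula $\mu(\mathcal{G}_D)=\tfrac12 L(\partial D)$, i.e.\ by the case $f\equiv 1$ of the theorem itself. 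Combined with $\partial\rho_D/\partial s_1=-\cos\alpha_1$ and $\partial\rho_D/\partial s_2=+\cos\alpha_2$ (the asymmetric signs are forced by the ``same side of $\gamma$'' convention for the two angles), these identities translate $\int f(\rho_D)\,d\mu$ and $\int f'(\rho_D)\sinh\rho_D\cot\alpha_1\cot\alpha_2\,d\mu$ into appropriate multiples of $\int_\mathcal{D}f(\rho_D)\,\partial_{s_1s_2}\rho_D\,ds_1 ds_2$ and $-\int_\mathcal{D} f'(\rho_D)\,\partial_{s_1}\rho_D\,\partial_{s_2}\rho_D\,ds_1 ds_2$ respectively.

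By the Leibniz rule these two integrands differ exactly by the total $s_1$-derivative of $f(\rho_D)\,\partial_{s_2}\rho_D=f(\rho_D)\cos\alpha_2$, so the theorem reduces to evaluating $\int_\mathcal{D}\partial_{s_1}(f(\rho_D)\cos\alpha_2)\,ds_1 ds_2$ as a sum of boundary contributions at the vertices of $\partial D$, as $s_1$ runs around $\partial D\setminus a_j$ with $s_2\in a_j$ fixed. The key observation is that $f(\rho_D)\cos\alpha_2$ depends only on the chord $P_1P_2$ and on the tangent to $\partial D$ at $P_2$: when $s_1$ crosses a vertex of $\partial D$ not adjacent to $a_j$, the tangent vector $\vec u_1$ at $P_1$ jumps but neither the chord nor the tangent at $P_2$ changes, so $f(\rho_D)\cos\alpha_2$ is continuous across the vertex and those contributions telescope to zero. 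Only the two vertices that are endpoints of $a_j$ contribute. At each of these the chord degenerates to a subsegment of $a_j$ itself, so $\rho_D$ reduces to the distance $x_1$ (respectively $x_2=|a_j|-x_1$) from $P_2$ to the relevant endpoint of $a_j$, and $\cos\alpha_2=\pm 1$ in the two one-sided limits. The two contributions combine to $f(x_1)+f(x_2)$; integrating over $s_2\in a_j$ yields $2\int_0^{|a_j|}f(x)\,dx$, and summing over $j$ and carrying through the normalization factor $c$ produces the claimed $\tfrac12\sum_{i=1}^n\int_0^{|a_i|}f(x)\,dx$.

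The main obstacle is the careful signed bookkeeping. The ``same side of $\gamma$'' convention forces the asymmetric relations $\partial\rho_D/\partial s_1=-\cos\alpha_1$ and $\partial\rho_D/\partial s_2=+\cos\alpha_2$, and it is precisely this asymmetry that makes the sign of the $\int f'(\rho_D)\sinh\rho_D\cot\alpha_1\cot\alpha_2\,d\mu$ term in the identity come out correctly (with a $+$ rather than a $-$) after the integration by parts. One must also verify carefully that $f(\rho_D)\cos\alpha_2$ really is continuous at every vertex of $\partial D$ not adjacent to $a_j$, and evaluate its one-sided limits at the two endpoints of $a_j$ (where the limiting ``chord'' lies along $a_j$ itself). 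Granting these local checks and the standard Jacobian computation fixing the overall normalization of $d\mu$, the theorem reduces to a single application of the fundamental theorem of calculus, carried out edge by edge along the piecewise smooth closed curve $\partial D$.
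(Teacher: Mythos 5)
Your argument is correct and is essentially the Stokes-type proof the paper gives (following Cabo): both pull the integrals back to $\bigcup_{i\neq j}a_i\times a_j$ via the same trigonometric identities $\partial\rho_D/\partial s_1=-\cos\alpha_1$, $\partial\rho_D/\partial s_2=\cos\alpha_2$ and ${\rm d}\mu=\tfrac{1}{2}\frac{\sin\alpha_1\sin\alpha_2}{\sinh\rho_D}\,{\rm d}s_1{\rm d}s_2$, and both convert the difference of the two sides of the identity into boundary terms by integration by parts. The genuine difference is the choice of primitive: the paper uses the symmetric $1$-form $\omega_{jk}=-\tfrac{1}{4}(\cos\alpha_j\,{\rm d}l_j+\cos\alpha_k\,{\rm d}l_k)$ and applies Stokes' theorem on each rectangle $a_j\times a_k$, which produces four diagonal-length integrals per pair of edges and then requires an explicit combinatorial cancellation between the formulas for $(j,k)$ and $(j\pm1,k)$; you instead integrate the single total derivative $\partial_{s_1}\bigl(f(\rho_D)\cos\alpha_2\bigr)$ over the arc $\partial D\setminus a_j$ for fixed $s_2\in a_j$, so that the cancellation becomes the observation that $f(\rho_D)\cos\alpha_2$ is continuous in $s_1$ across every vertex not adjacent to $a_j$ --- the same cancellation in a manifestly telescoping form, which is a tidier bookkeeping. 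Your endpoint evaluation ($\cos\alpha_2\to\pm1$ as the chord degenerates into $a_j$, contributions $f(x)+f(|a_j|-x)$, hence $2\int_0^{|a_j|}f$ per edge, times $1/4$ on the ordered double cover) does yield the correct constant $\tfrac{1}{2}$. The one point you must repair is the normalization: you propose to fix $c$ by the hyperbolic Crofton formula, i.e.\ by the case $f\equiv1$ of the theorem being proved, but in this paper Crofton's formula is Corollary \ref{c1}, itself deduced from Theorem \ref{1}, so this is circular as stated; instead compute $c=1/2$ directly by matching the local expression $c\sin\alpha\,{\rm d}l\,{\rm d}\alpha$ against the cross-ratio definition of the Liouville measure, as is done in Section 3 of the paper.
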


By using polygonal domains to approximate hyperbolic convex compact domains with $C^1-$boundary, we can prove the hyperbolic version of the Pleijel identity:

\begin{theorem}\label{2}
 Let $f$ be in $C^1(\mathbb{R};\mathbb{R})$. Suppose that $\partial D$ is $C^1$. With the same notation as in Theorem \ref{1}, we have the following identity:
\begin{equation*}
 \int_{\mathcal{G}_{D}}(f\circ\rho_D)\,{\rm d}\mu=\int_{\mathcal{G}_{D}}(f'\circ\rho_D)\sinh\rho_D\cot\alpha_1\cot\alpha_2 \,{\rm d}\mu+\frac{1}{2}f(0)L(\partial D).
\end{equation*}
In particular if $f(0)=0$, then we have the hyperbolic version of the Pleijel identity.
\end{theorem}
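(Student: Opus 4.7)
The natural plan, as hinted in the paragraph preceding the statement, is to approximate $D$ by convex geodesic polygonal domains and pass to the limit using Theorem~\ref{1}. Since $D$ is compact convex with $C^1$ boundary, I would inscribe a sequence of convex geodesic polygons $D_n\subset D$ whose vertices lie on $\partial D$ and whose maximum edge length $\varepsilon_n$ tends to zero; then $D_n\to D$ in Hausdorff distance and $L(\partial D_n)\to L(\partial D)$ by standard arguments. Writing $a_1^{(n)},\ldots,a_{k_n}^{(n)}$ for the edges of $\partial D_n$, Theorem~\ref{1} applied to $D_n$ yields
\begin{equation*}
\int_{\mathcal{G}_{D_n}}(f\circ\rho_{D_n})\,{\rm d}\mu=\int_{\mathcal{G}_{D_n}}(f'\circ\rho_{D_n})\sinh\rho_{D_n}\cot\alpha_1^{(n)}\cot\alpha_2^{(n)}\,{\rm d}\mu+\frac{1}{2}\sum_{i=1}^{k_n}\int_0^{|a_i^{(n)}|}f(x)\,{\rm d}x,
\end{equation*}
and I would take $n\to\infty$ in each of the three terms.

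For the left-hand side, $\rho_{D_n}(\gamma)\to\rho_D(\gamma)$ for every geodesic $\gamma$ that is not tangent to $\partial D$, and the set of tangent geodesics has $\mu$-measure zero because $\partial D$ is $C^1$. Compactness of $D$ makes $\rho_D$ bounded and $\mu(\mathcal{G}_D)$ finite, so dominated convergence gives convergence to $\int_{\mathcal{G}_D}(f\circ\rho_D)\,{\rm d}\mu$. For the boundary sum, a first-order Taylor expansion gives $\int_0^{|a_i^{(n)}|}f(x)\,{\rm d}x=f(0)|a_i^{(n)}|+O(|a_i^{(n)}|^2)$; summing over $i$ and using $\sum_i|a_i^{(n)}|\to L(\partial D)$ together with $\max_i|a_i^{(n)}|\to 0$ shows that this term tends to $\tfrac{1}{2}f(0)L(\partial D)$.

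The main obstacle is the middle term. For any $\gamma\in\mathcal{G}_D$ not tangent to $\partial D$, the intersection points $\gamma\cap\partial D_n$ converge to $\gamma\cap\partial D$ and, because $\partial D$ is $C^1$, the tangent directions defining $\alpha_i^{(n)}$ converge to those defining $\alpha_i$; hence the integrand converges pointwise almost everywhere on $\mathcal{G}_D$. The difficulty is that $\cot\alpha_i$ blows up for near-tangent chords, so a uniform integrable bound is not automatic. I would handle this by splitting $\mathcal{G}_D$ into a good set $G_\delta=\{\sin\alpha_1\ge\delta,\ \sin\alpha_2\ge\delta\}$, on which the integrand is uniformly bounded and dominated convergence applies, and its complement, whose $\mu$-measure tends to zero with $\delta$ thanks to the $C^1$ regularity of $\partial D$. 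A quantitative control of $\cot\alpha_1^{(n)}\cot\alpha_2^{(n)}$ near the tangent directions, using the compactness of $\partial D$ and a modulus of continuity of its tangent map, should show that the bad-set contribution is bounded uniformly in $n$ by a quantity going to zero as $\delta\to 0$. Combining the three limits yields the claimed identity, and setting $f(0)=0$ gives the hyperbolic Pleijel identity.
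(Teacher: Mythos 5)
Your proposal is correct and follows essentially the same route as the paper: inscribe a sequence of convex geodesic polygons with vertices on $\partial D$ whose maximal edge length tends to zero, apply Theorem~\ref{1} to each, and pass to the limit in the three terms. You are in fact more careful than the paper about the middle term; note that the apparent blow-up of $\cot\alpha_1\cot\alpha_2$ near tangency is tamed by rewriting that integral via (\ref{local}) as $\tfrac{1}{2}\int (f'\circ\rho)\cos\alpha_1\cos\alpha_2\,{\rm d}l_1{\rm d}l_2$, whose integrand is bounded.
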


By choosing $f$ carefuller, we prove the following two applications of Theorems \ref{1} and \ref{2}: 

\begin{corollary}\label{c1}
Let $D$ be a convex compact domain in $\mathbb{H}$ with polygonal boundary or $C^1$ boundary, the Liouville measure of $\mathcal{G}_D$ is one half of the length of the boundary of $D$.
\end{corollary}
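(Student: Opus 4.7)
The plan is to apply Theorem \ref{1} (in the polygonal case) and Theorem \ref{2} (in the $C^1$ case) to the constant function $f\equiv 1$. This choice is manifestly in $C^1(\mathbb{R};\mathbb{R})$, and it reduces each side of the identities to something trivial to read off.

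In the polygonal case, the left-hand side of Theorem \ref{1} becomes $\int_{\mathcal{G}_D} 1\,{\rm d}\mu = \mu(\mathcal{G}_D)$. Since $f'\equiv 0$, the first term on the right-hand side vanishes identically (one only needs to know that $\sinh\rho_D\cot\alpha_1\cot\alpha_2$ is $\mu$-integrable on $\mathcal{G}_D$, which is ensured by the hypotheses of Theorem \ref{1} once the theorem is invoked). The boundary sum becomes $\tfrac{1}{2}\sum_{i=1}^n\int_0^{|a_i|}1\,{\rm d}x=\tfrac{1}{2}\sum_{i=1}^n |a_i|=\tfrac{1}{2}L(\partial D)$, which gives the desired equality.

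In the $C^1$ case, the same choice $f\equiv 1$ in Theorem \ref{2} again kills the first integral on the right, and the remaining term is $\tfrac{1}{2}f(0)L(\partial D)=\tfrac{1}{2}L(\partial D)$, yielding $\mu(\mathcal{G}_D)=\tfrac{1}{2}L(\partial D)$.

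There is essentially no obstacle: the corollary is a one-line specialization of the Ambartzumian--Pleijel identity and its smooth counterpart. The only point worth flagging is that one should not try to use the restricted form of Theorem \ref{2} requiring $f(0)=0$, since with $f\equiv 1$ the boundary term is precisely what carries the content; the more general statement of Theorem \ref{2} (without the $f(0)=0$ assumption) is what is needed here.
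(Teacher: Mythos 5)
Your proposal is correct and coincides with the paper's own argument: take $f\equiv 1$ in Theorem \ref{1} (polygonal case) and Theorem \ref{2} ($C^1$ case), note that $f'\equiv 0$ kills the first integral on the right, and read off $\mu(\mathcal{G}_D)=\tfrac{1}{2}L(\partial D)$ from the remaining boundary term. Your remark that one must use the version of Theorem \ref{2} without the $f(0)=0$ restriction is a sensible clarification but does not change the substance.
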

\begin{corollary}\label{c2}
Let $\partial D$ be $C^1$. Then we have the hyperbolic isoperimetric inequality:
\begin{equation*} 
L(\partial D)^2\ge4\pi A(D)+A(D)^2,
\end{equation*}
where the equality holds if and only if $D$ is a disk in $\mathbb{H}$.
\end{corollary}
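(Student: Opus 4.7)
The plan is to apply Theorem \ref{2} with $f(x)=x$, convert the resulting identity into an integral over pairs of boundary points, and then apply Cauchy--Schwarz. Since $f(x)=x$ satisfies $f(0)=0$ and $f'(x)=1$, Theorem \ref{2} gives
\begin{equation*}
\int_{\mathcal{G}_D}\rho_D\,{\rm d}\mu \;=\; \int_{\mathcal{G}_D}\sinh\rho_D\cot\alpha_1\cot\alpha_2\,{\rm d}\mu.
\end{equation*}
Combining this with the hyperbolic Santal\'o identity $\int_{\mathcal{G}_D}\rho_D\,{\rm d}\mu=\frac{\pi}{2}A(D)$ (a direct consequence of Fubini on the unit tangent bundle over $D$, in the normalization of $\mu$ fixed by Corollary \ref{c1}) and the change of variables
\[{\rm d}\mu=\frac{1}{4}\,\frac{\sin\alpha_1\sin\alpha_2}{\sinh\rho_D}\,{\rm d}s_1\,{\rm d}s_2\]
(the factor $1/4$ being pinned down by $\int{\rm d}\mu=L(\partial D)/2$) rewrites this as
\begin{equation*}
2\pi A(D) \;=\; \iint_{\partial D\times\partial D}\cos\alpha_1\cos\alpha_2\,{\rm d}s_1\,{\rm d}s_2.
\end{equation*}

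Cauchy--Schwarz on $\partial D\times\partial D$ together with the $\alpha_1\leftrightarrow\alpha_2$ symmetry then yields $\iint\cos^2\alpha_1\,{\rm d}s_1\,{\rm d}s_2\ge 2\pi A(D)$. Using $1=\cos^2\alpha_1+\sin^2\alpha_1$ we obtain the intermediate bound $L(\partial D)^2 \ge 2\pi A(D)+\iint\sin^2\alpha_1\,{\rm d}s_1\,{\rm d}s_2$. Symmetrising and applying $\sin^2\alpha_1+\sin^2\alpha_2\ge 2\sin\alpha_1\sin\alpha_2$, together with the change-of-variables formula applied in reverse, reduces the problem to showing
\begin{equation*}
\int_{\mathcal{G}_D}\sinh\rho_D\,{\rm d}\mu \;\ge\; \frac{A(D)\bigl(A(D)+2\pi\bigr)}{4}.
\end{equation*}

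This last inequality should follow from a second application of Theorem \ref{2} with $f(x)=\cosh(x)-1$ (which gives $\int(\cosh\rho_D-1)\,{\rm d}\mu=\int\sinh^2\rho_D\cot\alpha_1\cot\alpha_2\,{\rm d}\mu$) combined with the hyperbolic Gauss--Bonnet formula $\int_{\partial D}\kappa_g\,{\rm d}s=2\pi+A(D)$, the $A(D)$ on the right being precisely the source of the extra $A(D)^2$ term that distinguishes the hyperbolic isoperimetric inequality from its Euclidean counterpart. Equality throughout forces $\cos\alpha_1=\cos\alpha_2$ and $\sin\alpha_1=\sin\alpha_2$ on every chord: the convex $C^1$ domain is then rotationally symmetric about each of its interior points, hence a hyperbolic disk.

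The hard part will be the last displayed inequality. A naive Cauchy--Schwarz applied to $\frac{\pi}{2}A=\int\sinh\rho_D\cot\alpha_1\cot\alpha_2\,{\rm d}\mu$ produces the divergent quantity $\int\cot\alpha_1\cot\alpha_2\,{\rm d}\mu$ (since $\cot\alpha_i$ blows up for geodesics nearly tangent to $\partial D$); any viable bound must therefore weight by a power of $\sinh\rho_D$ before integrating, so as to absorb this boundary singularity.
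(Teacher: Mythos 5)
Your first half is correct and is in substance the paper's own argument: the two estimates you apply (Cauchy--Schwarz on $\cos\alpha_1\cos\alpha_2$ and the AM--GM step on $\sin\alpha_1\sin\alpha_2$) combine into the single pointwise inequality $\cos\alpha_1\cos\alpha_2+\sin\alpha_1\sin\alpha_2=\cos(\alpha_1-\alpha_2)\le 1$, which is exactly how the paper bounds $L(\partial D)^2$ from below by $\iint\cos\alpha_1\cos\alpha_2\,{\rm d}s_1{\rm d}s_2+\iint\sin\alpha_1\sin\alpha_2\,{\rm d}s_1{\rm d}s_2=4\int_{\mathcal{G}_D}\rho_D\,{\rm d}\mu+4\int_{\mathcal{G}_D}\sinh\rho_D\,{\rm d}\mu$. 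Your normalizations ($\int\rho_D\,{\rm d}\mu=\tfrac{\pi}{2}A(D)$, the factor $1/4$ for the density on the full product $\partial D\times\partial D$) are consistent with the paper, and your equality discussion ($\alpha_1=\alpha_2$ on every chord forces a disk) matches the paper's, which is equally terse on this point.

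The genuine gap is the last displayed inequality, $\int_{\mathcal{G}_D}\sinh\rho_D\,{\rm d}\mu\ge\tfrac14 A(D)\bigl(A(D)+2\pi\bigr)$, which you yourself flag as ``the hard part'' and for which you offer only a speculative route. This is precisely where the $A(D)^2$ term comes from, and it is in fact an \emph{identity}, not an inequality: it is equivalent to Santal\'o's formula $\int_{\mathcal{G}_D}(\sinh\rho_D-\rho_D)\,{\rm d}\mu=\tfrac14 A(D)^2$, which the paper proves by writing $A(D)^2=\int_D\int_D{\rm dVol}(P_1)\,{\rm dVol}(P_2)$ and changing variables to the geodesic through $P_1,P_2$ together with the positions $q,q'$ of the two points on it, so that ${\rm dVol}(P_1)\,{\rm dVol}(P_2)=2\sinh|q-q'|\,{\rm d}q\,{\rm d}q'\,{\rm d}\mu$ and $\int_0^\rho\int_0^\rho 2\sinh|q-q'|\,{\rm d}q\,{\rm d}q'=4(\sinh\rho-\rho)$. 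Your proposed substitute does not supply this: Theorem \ref{2} with $f(x)=\cosh x-1$ yields $\int(\cosh\rho_D-1)\,{\rm d}\mu=\int\sinh^2\rho_D\cot\alpha_1\cot\alpha_2\,{\rm d}\mu$, a different quantity with no evident relation to $\int\sinh\rho_D\,{\rm d}\mu$ or to $A(D)^2$; and the Gauss--Bonnet identity $\int_{\partial D}\kappa_g\,{\rm d}s=2\pi+A(D)$ requires $C^2$ regularity that the hypothesis does not grant and, more importantly, does not connect to any chord integral in your chain. Until you prove the Santal\'o identity (or an equivalent lower bound on $\int\sinh\rho_D\,{\rm d}\mu$), the proof is incomplete.
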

A priori, Theorems \ref{1} and \ref{2} only hold for a compact domain $D$. But the strategy of the proof can be extended to the non compact case so that we are able to compute the chord length distributions for an ideal triangle and an ideal quadrilateral in $\mathbb{H}$ as follows:
\begin{corollary}\label{c3}
 Let $T$ be an ideal triangle in $\mathbb{H}$ and $\mu$ be the Liouville measure on $\mathcal{G}_{\mathbb{H}}$. The chord length distribution ${\rm d}M_T=(\rho_T)_*{\rm d}\mu$ is given by:
 \begin{displaymath}
  {\rm d}M_T=\frac{3\rho\, {\rm d}\rho}{\sinh^2\rho}.
 \end{displaymath}
\end{corollary}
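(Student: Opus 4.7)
The plan is to compute the pushforward $(\rho_T)_*{\rm d}\mu$ directly, exploiting the three-fold symmetry of $T$ to reduce the problem to the contribution from a single pair of edges.

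First, I would place $T$ in the upper half-plane model of $\mathbb{H}$ with ideal vertices at $0,1,\infty$, so that the edges $a_1,a_2,a_3$ become the $y$-axis, the vertical line $\{x=1\}$, and the semicircle through $0$ and $1$, respectively. Since $T$ is convex, every geodesic in $\mathcal{G}_T$ crosses exactly two of the three edges; and since all ideal triangles are congruent, the rotation of $T$ by $2\pi/3$ permutes the pairs of edges and acts by an isometry, so the three pairs contribute equal mass. It therefore suffices to compute the pushforward from the subset of geodesics crossing $a_1$ and $a_2$ and multiply by $3$.

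Second, I would parameterise such a geodesic by its endpoints $(p,q)\in\partial\mathbb{H}$; the condition ``$\gamma$ crosses $a_1$ and $a_2$'' becomes $p<0,\ q>1$, and the Liouville measure reads ${\rm d}\mu={\rm d}p\,{\rm d}q/(q-p)^2$. I would then switch to arclength coordinates $s_i=\log y_i$ on $a_i$, where $(0,y_1)$ and $(1,y_2)$ denote the intersections of $\gamma$ with $a_1$ and $a_2$, using the identity
\begin{equation*}
 \cosh\rho_T=\frac{1+y_1^2+y_2^2}{2y_1y_2}
\end{equation*}
for the chord length. The map $(s_1,s_2)\in\mathbb{R}^2\mapsto(p,q)$ is a bijection onto the region $\{p<0,\,q>1\}$.

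Third, pass to $(\sigma,\delta)=(s_1+s_2,\,s_1-s_2)$; a short trigonometric computation gives
\begin{equation*}
 \cosh\rho_T=\cosh\delta+\tfrac{1}{2}e^{-\sigma},
\end{equation*}
so that, for fixed $\delta\in\mathbb{R}$, the parameter $\sigma$ is in bijection with $\rho_T\in(|\delta|,+\infty)$. A direct Jacobian computation then yields
\begin{equation*}
 {\rm d}\mu=\frac{{\rm d}\rho\,{\rm d}\delta}{2\sinh^2\rho}\qquad\text{on the set }\{|\delta|<\rho\}.
\end{equation*}
Integrating out $\delta$ over $(-\rho,\rho)$ contributes $\rho\,{\rm d}\rho/\sinh^2\rho$ from the pair $(a_1,a_2)$, and the global factor $3$ yields the announced density.

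The main obstacle will be the bookkeeping in the successive changes of variables: verifying the bijectivity of $(s_1,s_2)\mapsto(p,q)$, correctly tracking the Jacobians through the chain $(p,q)\to(s_1,s_2)\to(\sigma,\delta)\to(\rho,\delta)$, and confirming the factor $3$ from the symmetry argument. None of these steps is deep, but the trigonometric simplifications leading to the clean form $1/(2\sinh^2\rho)$ demand care. As a sanity check, the fact that $\int_0^\infty 3\rho/\sinh^2\rho\,{\rm d}\rho$ diverges at $\rho=0$ is consistent with $\mu(\mathcal{G}_T)=+\infty$, reflecting the infinite length of $\partial T$ in the spirit of Corollary \ref{c1}.
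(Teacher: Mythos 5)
Your proposal is correct, and I checked the key Jacobian claim: with $y_1=\sqrt{-pq}$, $y_2=\sqrt{(1-p)(q-1)}$ and $q-p=2y_1y_2\sinh\rho$, one indeed gets ${\rm d}\mu=\frac{1}{2}e^{-\sigma}\sinh^{-3}\rho\,{\rm d}s_1{\rm d}s_2=\frac{1}{4}e^{-\sigma}\sinh^{-3}\rho\,{\rm d}\sigma{\rm d}\delta$, and differentiating $\cosh\rho=\cosh\delta+\frac{1}{2}e^{-\sigma}$ at fixed $\delta$ gives $e^{-\sigma}{\rm d}\sigma=2\sinh\rho\,{\rm d}\rho$, hence ${\rm d}\mu=\frac{{\rm d}\rho\,{\rm d}\delta}{2\sinh^2\rho}$ on $\{|\delta|<\rho\}$ and the factor $3$ from the order-three isometry of $T$ is legitimate. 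Your global strategy coincides with the paper's: same normalization of the ideal triangle at $0,1,\infty$, same reduction by symmetry to the pair of vertical edges, same arclength coordinates $s_i=\log y_i$ and the same formula $\cosh\rho=(1+y_1^2+y_2^2)/(2y_1y_2)$. Where you genuinely diverge is the final change of variables. The paper fixes $l_1$ and solves the quadratic for $e^{l_3}$ in terms of $\rho$, which forces a split of the $(l_1,l_3)$-plane into two regions $U_1,U_2$ (the two branches of the square root), a separate Jacobian on each, and then a further auxiliary substitution $v$ so that the two pieces reassemble into $\int_{(\cosh\rho+\sinh\rho)/(2\sinh\rho)}^{\infty}\frac{{\rm d}v}{v(v-1)}=2\rho$. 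Your symmetric coordinates $(\sigma,\delta)=(s_1+s_2,s_1-s_2)$ make $\rho$ a monotone function of $\sigma$ for fixed $\delta$, so there is no branch analysis and no auxiliary variable: the density is immediately $\frac{1}{2\sinh^2\rho}$ with $\delta$ ranging over the interval $(-\rho,\rho)$ of length $2\rho$, which is where the factor $\rho$ comes from. This is a real simplification of the paper's computation. The only points to tidy up in a full write-up are the measure-zero caveat in ``every geodesic in $\mathcal{G}_T$ crosses exactly two edges'' (geodesics with an endpoint at an ideal vertex cross only one, but these form a $\mu$-null set) and an explicit verification that $(s_1,s_2)\mapsto(p,q)$ is onto $\{p<0,\ q>1\}$, which follows from the fact that two distinct points of $\mathbb{H}$ determine a unique geodesic.
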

\begin{corollary}\label{c4}
 Let $Q$ be an ideal quadrilateral in $\mathbb{H}$ and $\mu$ be the Liouville measure on $\mathcal{G}_{\mathbb{H}}$. Let $\gamma_1,\dots,\gamma_4$ be the 4 edges of $Q$ ordered counter-clockwise. The chord length distribution ${\rm d}M_Q=(\rho_Q)_*{\rm d}\mu$ is given by:
 \begin{displaymath}
  {\rm d}M_Q=\frac{12\rho\,{\rm d}\rho}{\sinh^2\rho}+{\rm d}M_{13}+{\rm d}M_{24},
 \end{displaymath}
where ${\rm d}M_{13}$ is the chord length distribution with respect to $\gamma_1$ and $\gamma_3$ and satisfies:
\begin{equation*}
\int_0^\rho{\rm d}M_{13}=\frac{1}{2}\int_{[\eta]}\frac{\cot\alpha_1(\rho,\eta)\cot\alpha_3(\rho,\eta)\sinh\rho\cosh w(\rho,\eta)}{\sinh\rho_1(\rho,\eta)\cot\alpha_1(\rho,\eta)+\sinh\rho_3(\rho,\eta)\cot\alpha_3(\rho,\eta)} \,{\rm d}\eta,
\end{equation*}
and ${\rm d}M_{24}$ is the chord length distribution with respect to $\gamma_2$ and $\gamma_4$ and satisfies:
\begin{equation*}
\int_0^\rho{\rm d}M_{24}=\frac{1}{2}\int_{[\eta]}\frac{\cot\alpha_2(\rho,\eta)\cot\alpha_4(\rho,\eta)\sinh\rho\cosh w(\rho,\eta)}{\sinh\rho_2(\rho,\eta)\cot\alpha_2(\rho,\eta)+\sinh\rho_4(\rho,\eta)\cot\alpha_4(\rho,\eta)} \,{\rm d}\eta,
\end{equation*} 
where $\eta$ is the angle parameter in the polar parametrization of the set of geodesics in $\mathbb{H}$ introduced later.
\end{corollary}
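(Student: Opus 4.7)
The plan is to decompose $\mathcal{G}_Q$ according to which pair of edges a chord connects, reducing the adjacent-pair contributions to the ideal-triangle calculation of Corollary~\ref{c3} and computing the opposite-pair contributions by an extension of the proof scheme of Theorem~\ref{1} to the non-compact setting. Because $Q$ is convex with geodesic boundary and no geodesic can cross $\partial Q$ in more than two points, every chord of $Q$ has its endpoints on exactly two distinct edges. This gives a disjoint decomposition $\mathcal{G}_Q=\bigsqcup_{1\le i<j\le 4}\mathcal{G}_{ij}$ and correspondingly ${\rm d}M_Q=\sum_{i<j}{\rm d}M_{ij}$. The six terms split into four adjacent-pair contributions $(i,i+1)$ and the two opposite-pair contributions $(1,3)$ and $(2,4)$.

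For each adjacent pair $(\gamma_i,\gamma_{i+1})$ meeting at the ideal vertex $v_{i+1}$, I would associate the ideal triangle $T^{(i)}$ with vertices $v_i,v_{i+1},v_{i+2}$ and sides $\gamma_i,\gamma_{i+1}$ together with the diagonal of $Q$ joining $v_i$ to $v_{i+2}$. By convexity of $Q$ one has $T^{(i)}\subset Q$, and by convexity of $T^{(i)}$ combined with the fact that a geodesic meets $\partial Q$ in at most two points, every chord of $Q$ with endpoints on $\gamma_i\cup\gamma_{i+1}$ coincides with a chord of $T^{(i)}$ having the same endpoints. Invoking Corollary~\ref{c3} on the four ideal triangles $T^{(1)},\dots,T^{(4)}$ and combining with the cyclic symmetry of an ideal triangle, which equidistributes the chord measure over its three pairs of sides, then assembles the stated leading term $\tfrac{12\rho\,{\rm d}\rho}{\sinh^2\rho}$ after summing all contributions.

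For the opposite-pair terms I would adapt the scheme of Theorem~\ref{1} to the non-compact setting: truncate $Q$ at each ideal vertex by a horocycle of depth $\varepsilon$ to obtain a compact convex polygon $Q_\varepsilon$, apply Theorem~\ref{1} to $Q_\varepsilon$ with a family of test functions tailored to isolate opposite-pair chords of length at most $\rho$, and pass to the limit $\varepsilon\to 0$. In the polar parametrization $(\rho,\eta)$ of $\mathcal{G}_\mathbb{H}$ referred to in the statement, a geodesic hitting $\gamma_1$ and $\gamma_3$ is determined by its two incidence angles $\alpha_1,\alpha_3$, the two signed distances $\rho_1,\rho_3$ from its intersection points with $\gamma_1,\gamma_3$ to a natural reference point, and a transversal offset $w$. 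The ratio in the stated integral then arises from the integrand $\sinh\rho_D\cot\alpha_1\cot\alpha_2$ of Theorem~\ref{1} after a change of variables from Liouville coordinates to $(\rho,\eta)$: the denominator $\sinh\rho_1\cot\alpha_1+\sinh\rho_3\cot\alpha_3$ is precisely the Jacobian of the implicit equation defining the level set $\{\rho_Q=\rho\}$, while $\cosh w$ enters through the co-area factor relating the chord length to the offset coordinate. The formula for ${\rm d}M_{24}$ is obtained by the obvious symmetric computation after relabelling.

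The main obstacle is controlling the non-compact limit rigorously. The boundary contribution $\tfrac12\sum_i\int_0^{|a_i^\varepsilon|}f(x)\,{\rm d}x$ of Theorem~\ref{1} diverges as $\varepsilon\to 0$, since the ideal edges have infinite length, and likewise the $\cot\alpha_1\cot\alpha_2$-integrand acquires singular contributions from chords hugging a cusp. One must verify that these divergences cancel exactly, leaving only the finite bulk terms identified above. A secondary technical point is pinning down the integration range $[\eta]$ in the statement so that it encodes precisely the opposite-pair chords of length at most $\rho$ and does not double-count chords already absorbed in the leading adjacent-pair term.
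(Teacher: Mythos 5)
Your decomposition of $\mathcal{G}_Q$ into the four adjacent pairs and the two opposite pairs, and the reduction of the adjacent pairs to the ideal-triangle computation, matches the paper's strategy, but the proposal has two genuine gaps. The first concerns the opposite pairs: you truncate $Q$ by horocycles, apply Theorem \ref{1} to $Q_\varepsilon$, and then merely assert that the divergent boundary term $\tfrac12\sum_i\int_0^{|a_i^\varepsilon|}f(x)\,{\rm d}x$ and the singular contributions from chords near the cusps ``must cancel'' as $\varepsilon\to 0$. That cancellation is precisely the hard step and is left unproved; moreover a horocyclic truncation does not produce a geodesic polygon, so Theorem \ref{1} does not even apply to your $Q_\varepsilon$ as described. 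The paper avoids the non-compact limit entirely by a localization: a chord joining the two disjoint edges $\gamma_1$ and $\gamma_3$ of length at most $\rho_0$ must have its feet in compact sub-arcs $[A_1,B_1]\subset\gamma_1$ and $[A_3,B_3]\subset\gamma_3$, since a point of $\gamma_1$ at distance greater than $\rho_0$ from $\gamma_3$ cannot be such a foot. One then runs the Stokes argument of Theorem \ref{1} on the single compact rectangle $[A_1,B_1]\times[A_3,B_3]$ with the step function $f=\chi_{[0,\rho_0]}$ (the $\delta$-formalism); all four corner diagonals have length exceeding $\rho_0$, so the boundary integrals vanish identically and no divergence ever appears. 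The surviving bulk term is converted to the stated $\eta$-integral via the polar parametrization based at the midpoint of the common perpendicular of $\gamma_1$ and $\gamma_3$, using ${\rm d}w=-{\rm d}\rho/(\sinh\rho_1\cot\alpha_1+\sinh\rho_3\cot\alpha_3)$; your identification of the denominator as a Jacobian and of $\cosh w$ as a co-area factor is consistent with this but is a description of the answer rather than a derivation.

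The second gap is an internal inconsistency in the adjacent-pair bookkeeping. By your own equidistribution argument each adjacent pair $(\gamma_i,\gamma_{i+1})$ contributes one third of ${\rm d}M_T$, namely $\rho\,{\rm d}\rho/\sinh^2\rho$ --- which is also what the single-pair computation $\int_{\mathcal{G}_0}f(\rho)\,{\rm d}\mu=\int_0^{\infty}\frac{f(\rho)\rho}{\sinh^2\rho}\,{\rm d}\rho$ in the proof of Corollary \ref{c3} yields --- so the four adjacent pairs sum to $4\rho\,{\rm d}\rho/\sinh^2\rho$. You cannot ``assemble'' the coefficient $12$ from that reasoning: that would require each adjacent pair to carry the full $3\rho\,{\rm d}\rho/\sinh^2\rho$, which your equidistribution step contradicts. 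You must resolve this discrepancy between your derivation and the stated leading term rather than assert the target value.
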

\begin{remark}
Corollary \ref{c1} is called the Crofton's formula.

Corollary \ref{c3} and Corollary \ref{c4} are the principle motivations of this paper. Corollary \ref{c3} was first proved by Bridgeman and Dumas in \cite{martin}. In that paper they considered the oriented geodesics which yields a difference by a factor $2$. In \cite{martin2}, instead of the Liouville measure, Bridgeman considered the volume form of the unit tangent bundle of hyperbolic space and described the length distribution using the Rogers dilogarithm functions.
\end{remark}

Another observation about the proof of the Ambartzumian-Pleijel identity is that as passing from the Euclidean case to the hyperbolic case, the only thing changed is the trigonometric functions. As a result of this observation, by using the general trigonometric functions, we show that our two theorem and their first two corollaries can be generalized for a maximally symmetric, simply connected, $2-$dimensional Riemannian manifold with constant sectional curvature $K\in\mathbb{R}$, denoted by $\mathbb{X}_K$.

The organization of this paper is the following: in the section $2$ and $3$, we give several formulas of the hyperbolic metric, its volume form and the Liouville measure; in section $4$, we prove the main theorems; in section $5$, all corollaries are proved; in the last section, we give our result for the $\mathbb{X}_K$ case.

\section*{Acknowledgement}
The author is greatly indebted to Greg McShane for suggesting this problem and for the useful discussions. The author thanks Andrew Yarmola for the useful discussions.

\section{The hyperbolic plane and its coordinate systems}
In this section we describe several coordinate systems on the upper half plane:
\begin{equation*}
 \mathbb{H}=\{z=x+iy\in\mathbb{C}: y\ge0\}.
\end{equation*}
Moreover we give the associated volume form for each coordinate system.

\subsection{Cartesian coordinate system}
This is the most familiar coordinate system of the upper half plane model. The coordinates of $\mathbb{C}$ induce coordinates on $\mathbb{H}$. 
\begin{center}
 \includegraphics[scale=0.6]{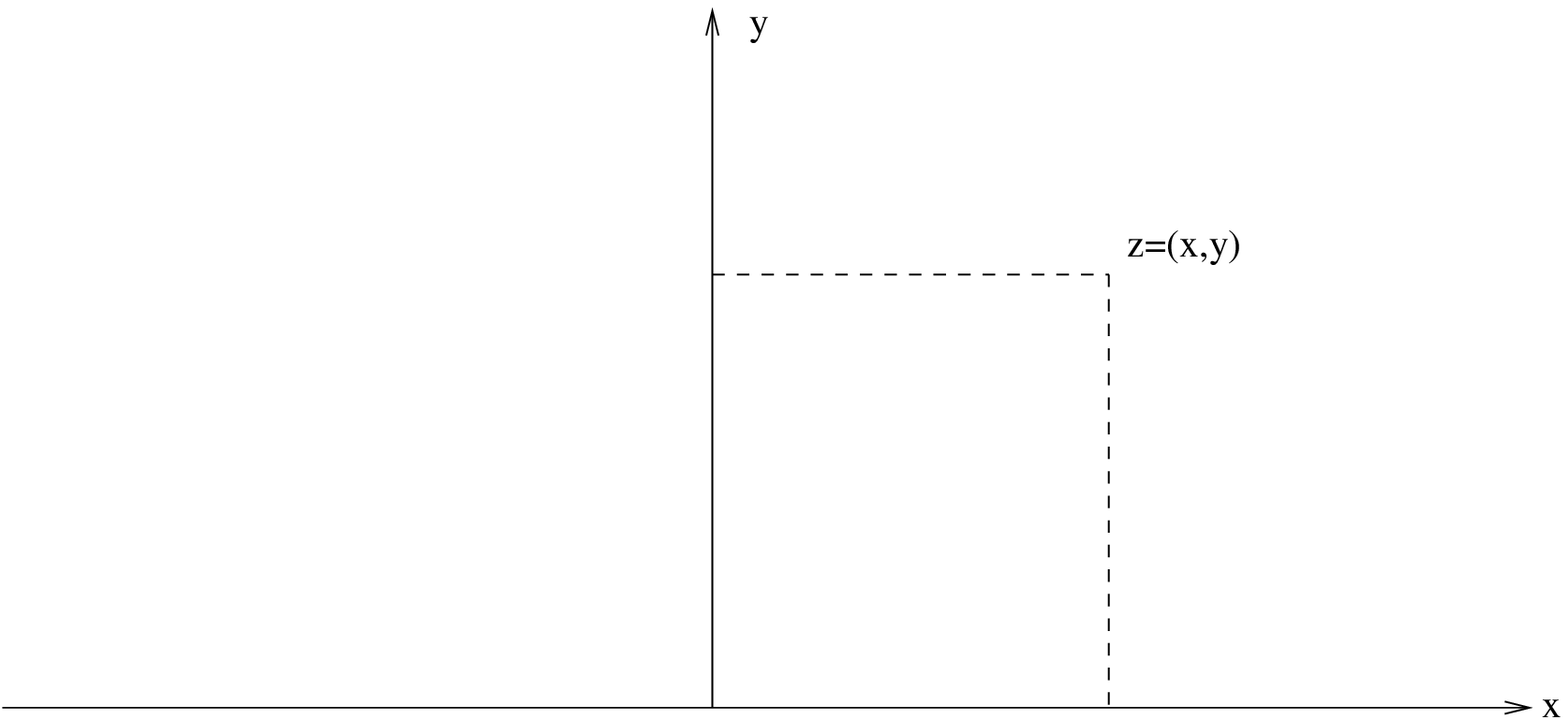}
\end{center}
By using these coordinates, the hyperbolic metric ${\rm d}s$ has the following expression:
\begin{equation}
 {\rm d}s=\frac{\sqrt{({\rm d}x)^2+({\rm d}y)^2}}{y}.
\end{equation}
and the associated hyperbolic volume form ${\rm dVol}$  is given by:
\begin{equation}
 {\rm dVol}=\frac{{\rm d}x{\rm d}y}{y^2}.
\end{equation}

\subsection{Polar coordinate system}
Generally speaking, this coordinate system is the pull back of the polar coordinate system of the Poincar\'e disk model $\mathbb{D}$ by the Cayley transformation:
\begin{equation}\label{cayley}
 \omega(z)=\frac{iz+1}{z+i},
\end{equation}

Let $z=(R,\theta)$ denote the polar coordinate system of $\mathbb{C}$ where $R$ is the radius and $\theta$ is the angle. Under this coordinates, the hyperbolic metric on $\mathbb{D}$ is given by:
\begin{equation}\label{disk}
{\rm d}s=\frac{2\sqrt{R^2({\rm d}\theta)^2+({\rm d}R)^2}}{1-R^2},
\end{equation}
and the associated volume form is:
\begin{equation*}
{\rm dVol}=\frac{4R\,{\rm d}R{\rm d}\theta}{(1-R^2)^2}.
\end{equation*}

By replacing the Euclidean radius $R$ by the hyperbolic radius $r$, we obtain a new coordinate system $(r,\theta)$ for $\mathbb{D}$. The relation between $r$ and $R$ can be computed by integrating Formula (\ref{disk}) along the geodesic from the origin of $\mathbb{D}$ to $(R,\theta)$:
\begin{equation*}
 r=\ln\frac{1+R}{1-R},
\end{equation*}
or equivalently:
\begin{equation*}
 \tanh \frac{r}{2}=R.
\end{equation*}

By changing variables, the hyperbolic metric under the new polar coordinates is:
\begin{equation*}
 {\rm d}s=\sqrt{\sinh^2 r({\rm d}\theta)^2+({\rm d}r)^2},
\end{equation*}
and its volume form is the following:
\begin{equation*}
 {\rm dVol}=\sinh r\,{\rm d}r{\rm d}\theta.
\end{equation*}

The pullback of this coordinate system of $\mathbb{D}$ by (\ref{cayley}) induces a coordinate system on $\mathbb{H}$ which we call the polar coordinate system of $\mathbb{H}$. 
\begin{center}
 \includegraphics[scale=0.6]{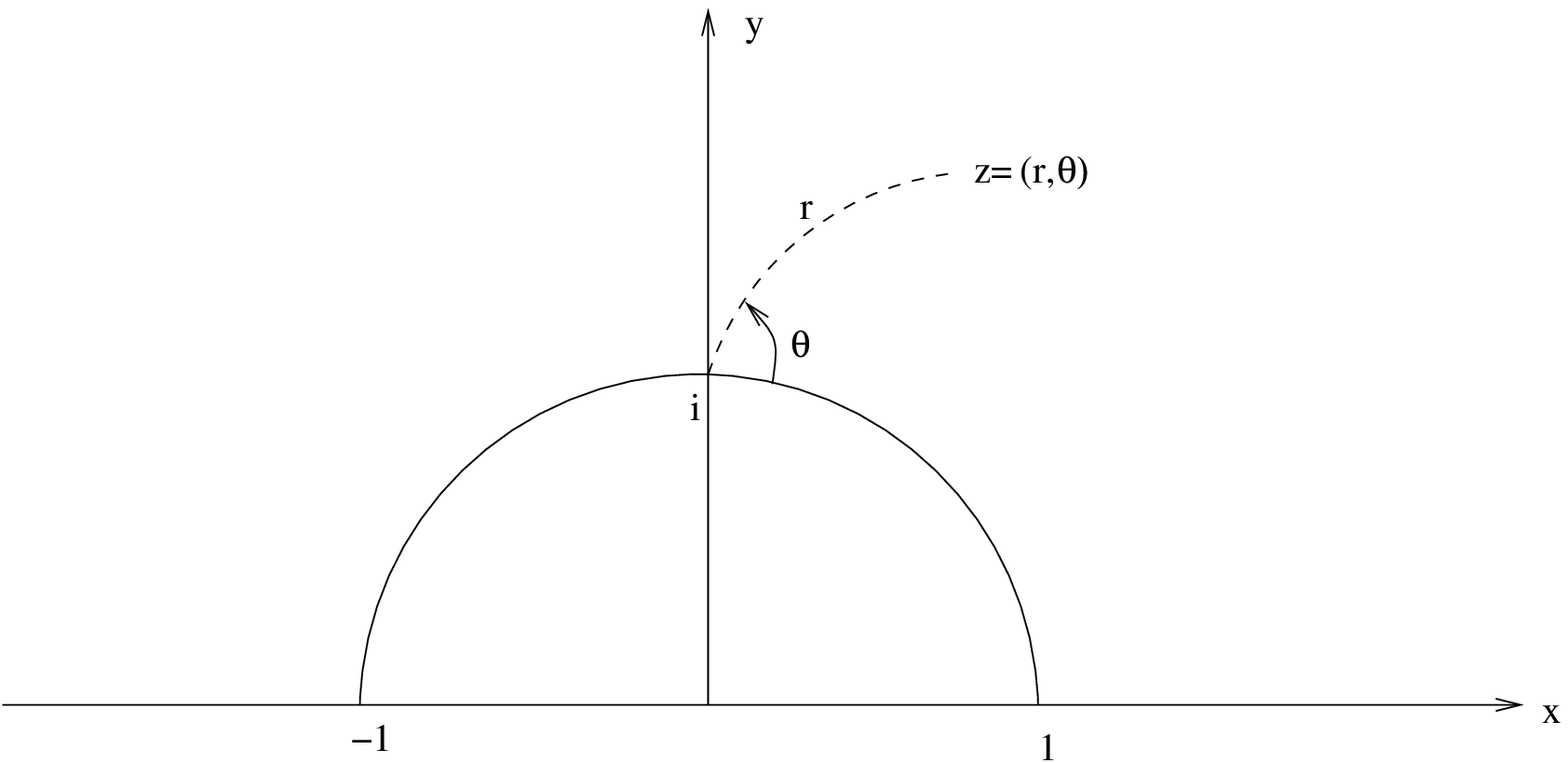}
\end{center}

\begin{remark}
By pulling back by the composition of the Cayley transformation (\ref{cayley}) with an element $A\in{\rm PSL}(2,\mathbb{R})$, we can define a new polar coordinate system $(r',\theta')$ for an arbitrary fixed pair $(z_0,\gamma_0)$ where $z_0$ is a point in $\mathbb{H}$ and $\gamma_0$ is a half-geodesic starting at $z_0$ such that $z_0$ has coordinates $(0,0)$ as the origin and the points on $\gamma_0$ have coordinates $(r,0)$. Let $z\in\mathbb{H}$. It has the coordinates $(r,\theta)$ and $(r',\theta')$ under the two different coordinate systems. By changing variables, we can verify that:
\begin{equation*}
 {\rm d}s(z)=\sqrt{\sinh^2 r({\rm d}\theta)^2+({\rm d}r)^2}=\sqrt{\sinh^2 r'({\rm d}\theta')^2+({\rm d}r')^2},
\end{equation*}
and
\begin{equation*}
 {\rm dVol}(z)=\sinh r\,{\rm d}r{\rm d}\theta=\sinh r'\,{\rm d}r'{\rm d}\theta'.
\end{equation*}
\end{remark}
\subsection{Rectangular coordinate system}
This coordinate is similar to the Cartesian coordinates for $\mathbb{C}$. Generally speaking we fix two geodesics as the horizontal axis and the vertical axis in $\mathbb{H}$ which are orthogonal to each other. Each point in $\mathbb{H}$ is described by a vertical coordinate and a horizontal coordinate. 

More precisely, let $\gamma_v$ be the geodesic ending at $0$ and $\infty$ with $\infty$ to be the positive end and let $\gamma_h$ be the geodesic ending at $1$ and $-1$ with $1$ to be the positive end. Set the intersection point $z=i$ to be the origin for both $\gamma_v$ and $\gamma_h$. Then we can parametrize both $\gamma_v$ and $\gamma_h$ by considering the directed hyperbolic distance from their origin. Denote by $p$ the parameter for $\gamma_v$ and by $q$ the parameter for $\gamma_h$. We use parameter $p$ and parameter $q$ to construct the rectangular coordinate system in the following way: the points on $\gamma_v$ have the rectangular coordinates $(p,0)$ and the points on $\gamma_h$ have the rectangular coordinates $(0,q)$; the horizontal line $p=p_0$ in this coordinate system is the geodesic orthogonal to $\gamma_v$ at the point $(p_0,0)$ and the vertical line $q=q_0$ consists of the points in $\mathbb{H}$ have hyperbolic distance $|q_0|$ to $\gamma_h$ and on the same side as $(0,q_0)$ with respect to $\gamma_v$. 
\begin{center}
 \includegraphics[scale=0.6]{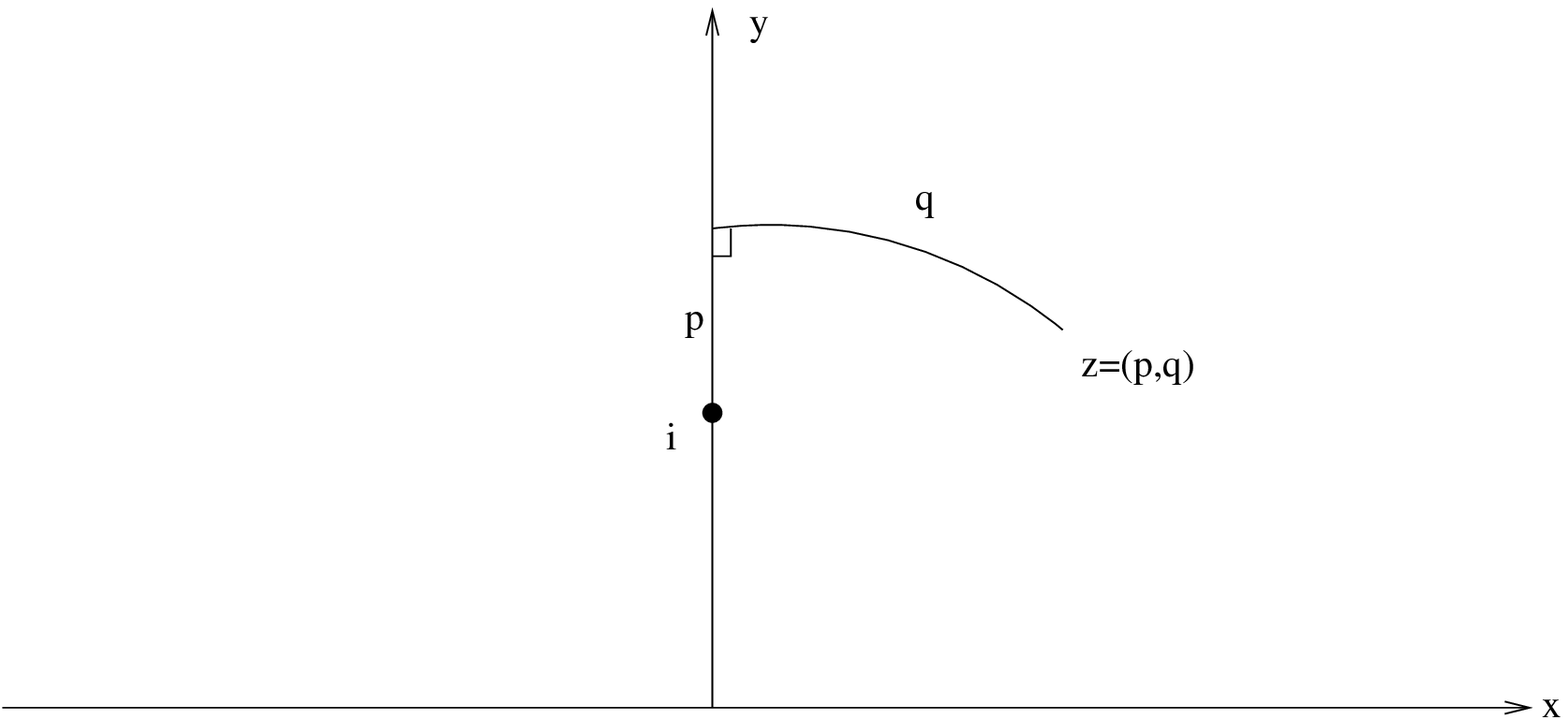}
\end{center}

By considering the usual polar coordinate of $\mathbb{C}$, we obtain the following relation between the Cartesian coordinates $(x,y)$ and the rectangular coordinates $(p,q)$ of $\mathbb{H}$:
\begin{eqnarray*}
 &&x=e^p\tanh q,\\
 &&y=\frac{e^p}{\cosh q}.
\end{eqnarray*}
By changing variables, we get the expression of the hyperbolic metric and its volume form using the parameters $p$ and $q$ as following:
\begin{eqnarray}
 &&{\rm d}s=\sqrt{\cosh^2q({\rm d}p)^2+({\rm d}q)^2},\\
 &&{\rm dVol}=\cosh q\,{\rm d}p{\rm d}q.
\end{eqnarray}

\begin{remark}
Choose another pair of oriented geodesics $(\gamma_v',\gamma_h')$ orthogonal to each other. By the same construction as above, we obtain another rectangular coordinate system $(p',q')$. Let $z\in \mathbb{H}$. Under two distinct rectangular coordinate system, it has two pairs of coordinates $(p,q)$ and $(p',q')$. We can verify that:
\begin{equation*}
{\rm d}s(z)=\sqrt{\cosh^2q({\rm d}p)^2+({\rm d}q)^2}=\sqrt{\cosh^2q'({\rm d}p')^2+({\rm d}q')^2}, 
\end{equation*}
and
\begin{equation*}
 {\rm dVol}(z)=\cosh q'{\rm d}p'{\rm d}q'=\cosh q\,{\rm d}p{\rm d}q.
\end{equation*}
\end{remark}

\section{Liouville Measure}
The group of orientation preserving isometries of $\mathbb{H}$ is isomorphic to ${\rm PSL}(2,\mathbb{R})$ and the action is given by M\"obius transformation. There is an unique measure $\mu$, up to scalar multiplication, on $\mathcal{G}_{\mathbb{H}}$ invariant under the action of ${\rm PSL}(2,\mathbb{R})$, namely the \textit{Liouville measure}. In this section, we give several parametrizations of $\mathcal{G}_{\mathbb{H}}$ and the associated expressions of $\mu$.

\subsection{Parametrization by using the boundary of $\mathbb{H}$}
A geodesic in $\mathbb{H}$ is uniquely determined by its end points in $\partial\mathbb{H}$. By this correspondence $\mathcal{G}_{\mathbb{H}}$ can be identified with the set $((\partial \mathbb{H}\times \partial \mathbb{H})\setminus \Delta)/\mathbb{Z}_2$ where $\Delta$ is the diagonal of $\partial \mathbb{H}\times \partial \mathbb{H}$ and the $\mathbb{Z}_2$ action exchanges the two end points of a geodesic. In the upper half plane model the boundary $\partial \mathbb{H}$ can be identified with $\mathbb{R}\cup\{\infty\}$. 
\begin{center}
 \includegraphics[scale=0.6]{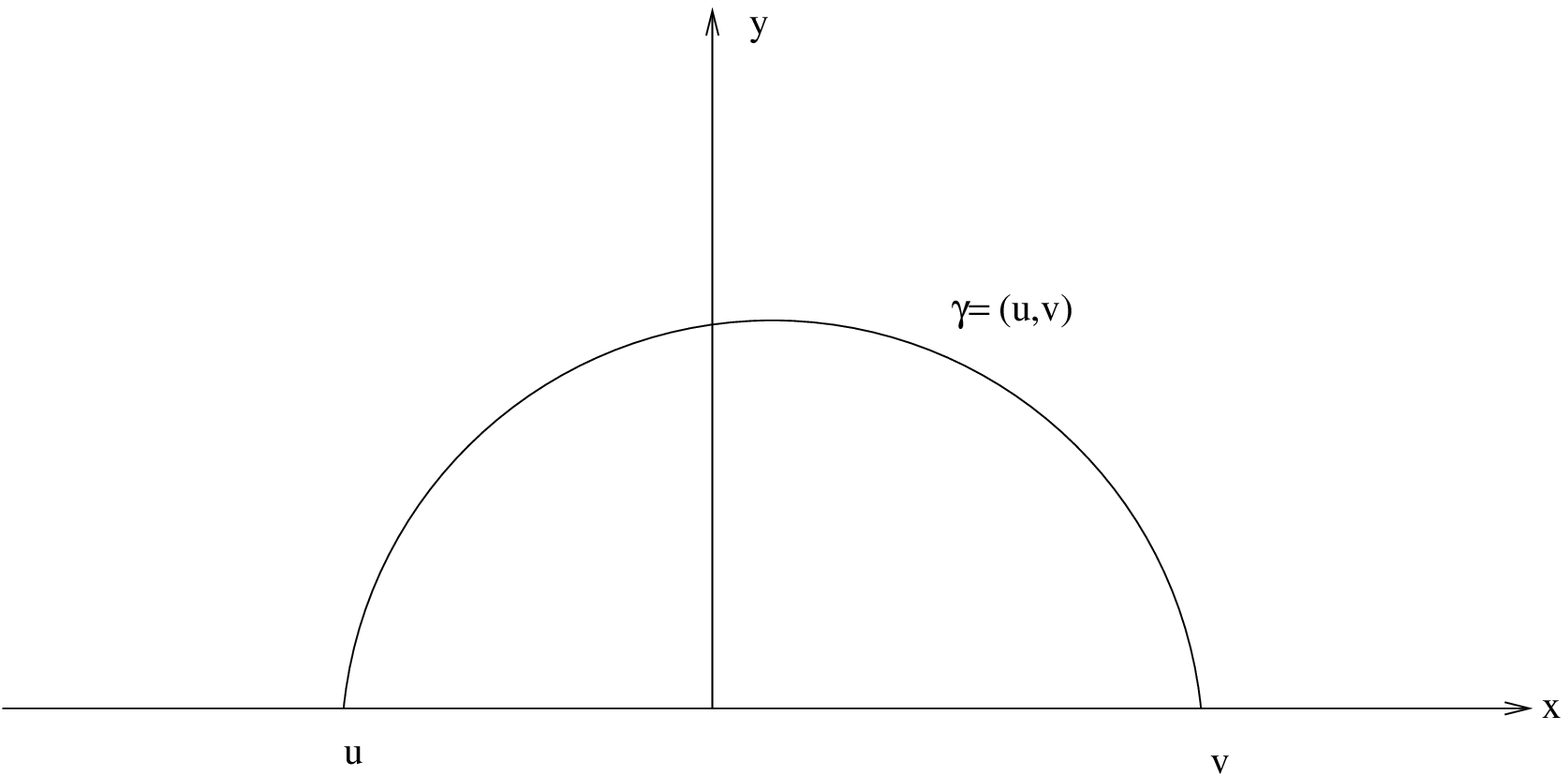}
\end{center}

With this parametrization, the Liouville measure ${\rm d}\mu$ at a geodesic $\gamma=(u,v)$ is given by:
\begin{displaymath}
 \rm{d}\mu(u,v)=\frac{{\rm d}u{\rm d}v}{|u-v|^2}.
\end{displaymath}

Suppose that $[a,b]$ and $[c,d]$ are two disjoint intervals in $\mathbb{R}\cup\{\infty\}$.  By integrating ${\rm d}\mu$, the Liouville measure of $[a,b]\times[c,d]$ is given in term of a cross-ratio:
\begin{displaymath}
 \mu([a,b]\times[c,d])=\left\lvert\log\left\lvert\frac{(a-c)(b-d)}{(a-d)(b-c)}\right\lvert\right\lvert.
\end{displaymath}

\subsection{Local parametrization by using oriented geodesics}
Let $\gamma_1$ be an oriented geodesic in $\mathbb{H}$. Let $\mathcal{G}_{\gamma_1}$ denote the set of geodesics intersecting $\gamma_1$. We fix a point of $\gamma_1$ to be the origin and fix an orientation on $\gamma_1$. We parametrize $\gamma_1$ by considering the directed hyperbolic distance from the origin. Let $l_1$ denote the parameter on $\gamma_1$. A geodesic $\gamma\in \mathcal{G}_{\gamma_1}$ is determined uniquely by the position of its intersection point $l_1$ and its angle of intersection $\alpha_1$ with $\gamma_1$. The intersection angle $\alpha_1$ is measured from $\gamma_1$ to $\gamma$ counter-clockwise. 
\begin{center}
 \includegraphics[scale=0.6]{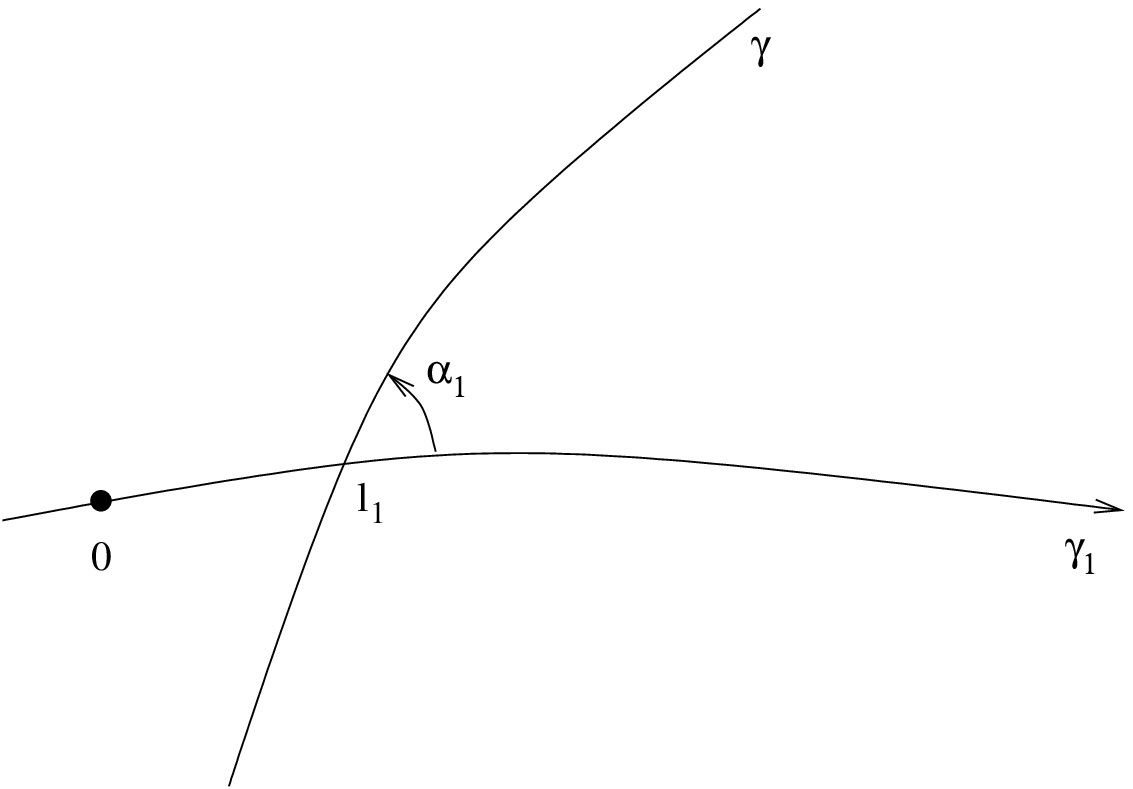}
\end{center}
On the set $\mathcal{G}_{\gamma_1}$ we have that:
\begin{displaymath}
 {\rm d}\mu(l_1,\alpha_1)=F(l_1,\alpha_1)\,{\rm d}l_1{\rm d}\alpha_1.
\end{displaymath}

\begin{proposition}
 The density $F(l_1,\alpha_1)$ is independent of the parameter $l_1$.
\end{proposition}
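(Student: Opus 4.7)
The plan is to exploit the $\mathrm{PSL}(2,\mathbb{R})$-invariance of the Liouville measure together with the existence of a one-parameter family of hyperbolic translations along $\gamma_1$. For each $t\in\mathbb{R}$, there is an orientation-preserving isometry $\phi_t\in\mathrm{PSL}(2,\mathbb{R})$ whose axis is $\gamma_1$ (with the chosen orientation) and which translates $\gamma_1$ by signed distance $t$. Concretely, after conjugating so that $\gamma_1$ is the imaginary axis in the upper half plane, $\phi_t$ is just $z\mapsto e^t z$, which obviously lies in $\mathrm{PSL}(2,\mathbb{R})$ and restricts to a translation by $t$ in the arclength parameter $l_1$.

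Next I would describe how $\phi_t$ acts on $\mathcal{G}_{\gamma_1}$ in the $(l_1,\alpha_1)$ coordinates. Since $\phi_t$ preserves $\gamma_1$ setwise with its orientation, it sends the intersection point at parameter $l_1$ to the intersection point at parameter $l_1+t$; since $\phi_t$ is a conformal orientation-preserving isometry, it preserves the oriented angle of intersection, hence $\alpha_1$ is unchanged. In other words, the induced map on $\mathcal{G}_{\gamma_1}$ is simply $(l_1,\alpha_1)\mapsto (l_1+t,\alpha_1)$.

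Invariance of $\mu$ under $\phi_t$ then reads
\begin{equation*}
F(l_1+t,\alpha_1)\,\mathrm{d}l_1\mathrm{d}\alpha_1 = (\phi_t)_*\bigl(F(l_1,\alpha_1)\,\mathrm{d}l_1\mathrm{d}\alpha_1\bigr) = F(l_1,\alpha_1)\,\mathrm{d}l_1\mathrm{d}\alpha_1,
\end{equation*}
for every $t\in\mathbb{R}$, so $F$ does not depend on $l_1$. No step is really difficult here; the only thing to be careful about is the orientation bookkeeping, namely verifying that $\phi_t$ acts trivially on $\alpha_1$ and as a genuine translation on $l_1$. Once the normalization $\gamma_1 = \{\mathrm{Re}(z)=0\}$ is fixed, this follows from the explicit form $z\mapsto e^t z$ and the conformality of Möbius transformations, so the argument reduces to citing the uniqueness (up to scalar) of the $\mathrm{PSL}(2,\mathbb{R})$-invariant measure established at the start of the section.
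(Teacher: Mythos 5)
Your proof is correct and follows essentially the same route as the paper: both use a hyperbolic element of $\mathrm{PSL}(2,\mathbb{R})$ translating along $\gamma_1$, observe that it acts on $\mathcal{G}_{\gamma_1}$ by $(l_1,\alpha_1)\mapsto(l_1+t,\alpha_1)$, and invoke the invariance of the Liouville measure. Your version is just a more explicit normalization ($\gamma_1$ as the imaginary axis, $\phi_t: z\mapsto e^t z$) of the paper's argument.
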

\begin{proof}
Fixing an angle $\alpha_1$, we need to prove that for any two distinct real numbers $l_1$ and $l_1'$, we have 
\begin{equation*}
F(l_1,\alpha_1)=F(l_1',\alpha_1).
\end{equation*}

For any pair of distinct real numbers $l_1$ and $l_1'$, there exists a hyperbolic element $g$ of ${\rm PSL}(2,\mathbb{R})$ fixing the end points of $\gamma_1$ such that $g$ sends the geodesic $\gamma=(l_1,\alpha_1)$ to $\gamma'=(l_1',\alpha_1)$. Let $L(g)$ be the translation length of $g$. Then $l_1'=l_1+L(g)$. The invariance of Liouville measure implies the following equality:
\begin{displaymath}
 F(l_1,\alpha_1)\,{\rm d}l_1{\rm d}\alpha_1=F(l_1',\alpha_1)\,{\rm d}l_1'{\rm d}\alpha_1.
\end{displaymath}

By changing variables, we obtain: 

\begin{displaymath}
 F(l_1,\alpha_1)\,{\rm d}l_1{\rm d}\alpha_1=F(l_1+L(g),\alpha)\,{\rm d}l_1{\rm d}\alpha_1,
\end{displaymath}
which implies that 
\begin{displaymath}
 F(l_1,\alpha)=F(l_1+L(\gamma),\alpha).
\end{displaymath}
\end{proof}
As a consequence of the above, we write $F(\alpha_1)$ short for $F(l_1,\alpha_1)$. To compute $F(\alpha_1)$, we choose another oriented geodesic $\gamma_2$ different from $\gamma_1$. By an analogous construction, we can define the parameters $(l_2,\alpha_2)$ for the set $\mathcal{G}_{g_2}$ of geodesics intersecting $\gamma_2$. 
\begin{remark}
The geodesic $\gamma_2$ can be any geodesic in $\mathcal{G}_\mathbb{H}$ other than $\gamma_1$, not necessarily disjoint with $\gamma_1$.

To simplify the computation, the definition of $\alpha_2$ is slightly different from $\alpha_1$. It is the angle measured from $\gamma_2$ to $\gamma$ clockwise instead of counter-clockwise.

Notice that to have both the parameters $(l_1,\alpha_1)$ and the parameters $(l_2,\alpha_2)$, the geodesic $\gamma$ need to intersect both $\gamma_1$ and $\gamma_2$. 
\end{remark}
\begin{center}
 \includegraphics[scale=0.6]{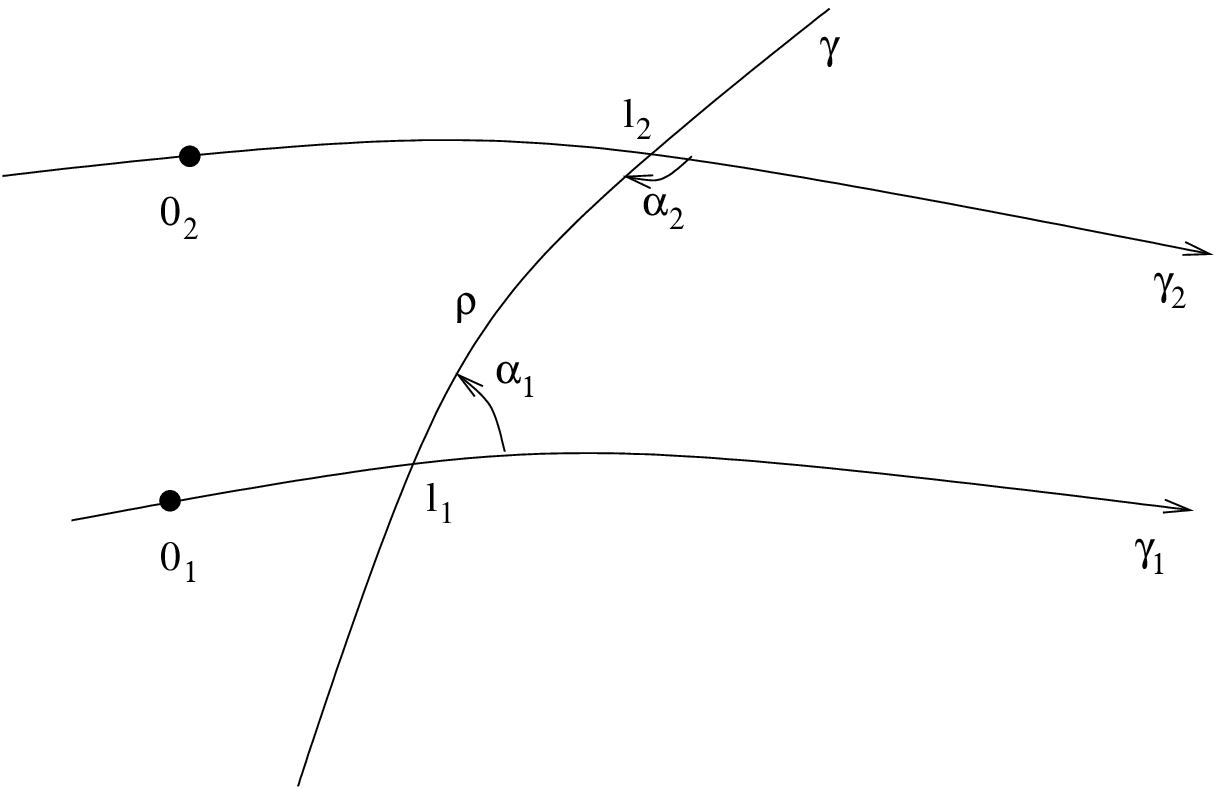}
\end{center}

By the same argument as above, we have:
\begin{displaymath}
 {\rm d}\mu(l_2,\alpha_2)=F(\pi-\alpha_2)\,{\rm d}l_2{\rm d}\alpha_2.
\end{displaymath}

The parameters $(l_1,\alpha_1)$ are evidently functions of $(l_2,\alpha_2)$ and vice-versa. By hyperbolic trigonometry, we have the following expressions for partial derivatives:
\begin{equation*}
 \frac{\partial\alpha_1}{\partial l_2}=\sigma_1(\gamma)\frac{\sin\alpha_2}{\sinh\rho(\gamma)},
\end{equation*}
and 
\begin{equation*}
 \frac{\partial\alpha_2}{\partial l_1}=\sigma_2(\gamma)\frac{\sin\alpha_1}{\sinh\rho(\gamma)},
\end{equation*}
where $\rho(\gamma)$ is the chord length of $\gamma$ with respect to $\gamma_1$ and $\gamma_2$, and the values of $\sigma_1(\gamma)$ and $\sigma_2(\gamma)$ depend on the relative position of the chord of $h$ with respect to $\gamma_1$ and $\gamma_2$: 
\begin{enumerate}[(i)]
 \item if the chord is on the left of $\gamma_1$ and left of $\gamma_2$, then $\sigma_1(\gamma)=-1$ and $\sigma_2(\gamma)=1$ ;
 \item if the chord is on the left of $\gamma_1$ and right of $\gamma_2$, then $\sigma_1(\gamma)=1$ and $\sigma_2(\gamma)=1$ ;
 \item if the chord is on the right of $\gamma_1$ and left of $\gamma_2$, then $\sigma_1(\gamma)=-1$ and $\sigma_2(\gamma)=-1$ ;
 \item if the chord is on the right of $\gamma_1$ and right of $\gamma_2$, then $\sigma_1(\gamma)=1$ and $\sigma_2(\gamma)=-1$.
\end{enumerate}

Now changing variables yields:
\begin{equation*}
 {\rm d}\mu=F(\alpha_1)\,{\rm d}l_1{\rm d}\alpha_1=\frac{F(\alpha_1)\sin\alpha_2}{\sinh\rho}\,{\rm d}l_1{\rm d}l_2,
\end{equation*}
and
\begin{equation*}
 {\rm d}\mu=F(\pi-\alpha_2)\,{\rm d}l_2{\rm d}\alpha_2=\frac{F(\pi-\alpha_2)\sin\alpha_1}{\sinh\rho}\,{\rm d}l_1{\rm d}l_2.
\end{equation*}

By inspection, the function $F$ has the form:
\begin{equation}\label{c}
 F(\alpha)=c\sin\alpha.
\end{equation}
where $c$ is a constant positive real number.

Taking $c=1/2$ we obtain exactly the Liouville measure that we gave in the former section:
\begin{equation}\label{Bon}
 {\rm d}\mu=\frac{1}{2}\sin\alpha\,{\rm d}l{\rm d}\alpha.
\end{equation}

The above computation also yields another local expression for $\mu$:
\begin{equation}\label{local}
 {\rm d}\mu=\frac{\sin\alpha_1\sin\alpha_2}{2\sinh\rho}\,{\rm d}l_1{\rm d}l_2.
\end{equation}
\begin{remark}
 The Formula (\ref{Bon}) was previously obtained by Bonahon in the appendix of \cite{Bonahon} using the Poincar\'e disc model $\mathbb{D}$.
\end{remark}

\subsection{Polar parametrization}
The polar parameters for a geodesic $\gamma\in \mathcal{G}$ are a pair $(w,\eta)$. The parameter $w$ is the hyperbolic distance from the point $i\in\mathbb{C}$ to $\gamma$. The parameter $\eta$ is the angle between two geodesics: one is the geodesic passing $i$ and orthogonal to $\gamma$; the other one is the geodesic whose end points are $1$ and $-1$. It is measured from the latter to the former counter-clockwise.
\begin{center}
 \includegraphics[scale=0.6]{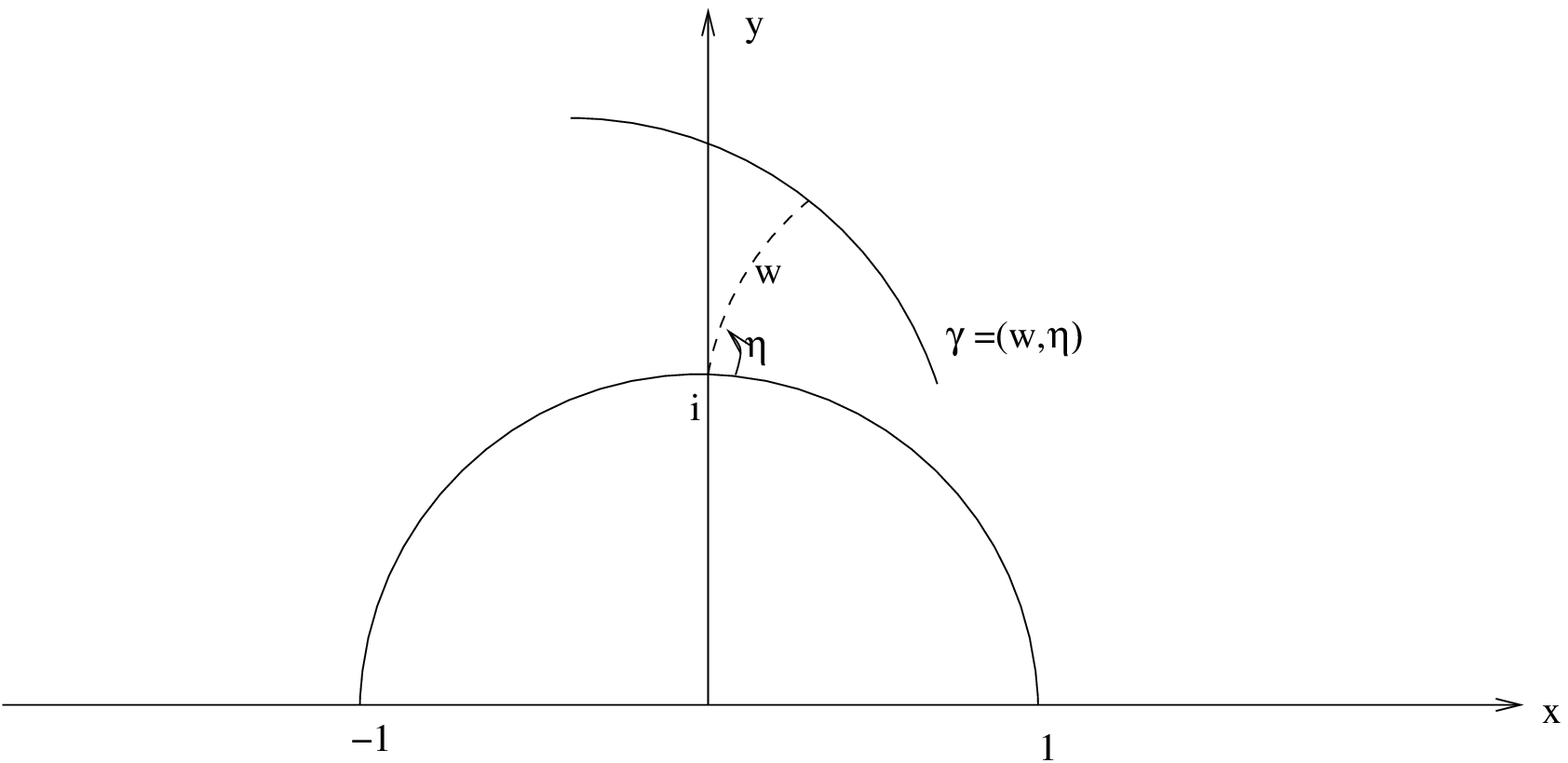}
\end{center}

The expression of $\mu$ in these parameters can be computed from the above local expression by using hyperbolic trigonometry.

Let $(l,\alpha)$ be the parameters with respect to the imaginary axis where the origin is the point $i$. For a geodesic having both parameters $(l,\alpha)$ and $(w,\eta)$, the relation between its two parametrizations is given by the following formulas:
\begin{eqnarray*}
&&\tanh l=\frac{\tanh w}{\cos\eta},\\
&&\cos\alpha=\cosh w\sin\eta.
\end{eqnarray*}

By changing variables, we obtain the formula for $\mu$ in terms of the parameters $(u,\eta)$:
\begin{equation}\label{po}
{\rm d}\mu=\frac{1}{2}\cosh w\,{\rm d}w{\rm d}\eta.
\end{equation}
By considering the rotations of $\mathbb{H}$ with the center $i$, Formula (\ref{po}) is well defined for all geodesics in $\mathbb{H}$. 

\section{The hyperbolic Ambartzumian-Pleijel identity}

In this section, we give proofs of Theorem \ref{1} and Theorem \ref{2}.

\subsection{Proof of Theorem \ref{1}}
\begin{proof}
The idea of proof is from \cite{cabo}. Recall that $D$ is a convex compact domain in $\mathbb{H}$ whose boundary is a geodesic polygon and $a_1,\dots,a_n$ are the edges of $\partial D$. Assume that the edges are labeled counter-clockwise. Recall that $\mathcal{G}_D$ is the subset of $\mathcal{G}_{\mathbb{H}}$ consisting of the geodesics intersecting $D$. We parametrize the geodesic in $\mathcal{G}_D$ by a pair of distinct boundary points. Then there is a bijection between $\mathcal{G}_D$ and $(\bigcup\limits_{j>k}a_j\times a_k)\setminus Z$ where $Z$ comes from the multiplicities of the diagonals of $\partial D$ in $\bigcup\limits_{j>k}a_j\times a_k$. As the Liouville measure has no atom, we have $\mu(Z)=0$.

Let $f$ be a $C^1$ function from $\mathbb{R}$ to $\mathbb{R}$. Recall that $\rho_D$ is the chord length with respect to $D$. We have the following equality:
\begin{equation}\label{int}
 \int_{\mathcal{G}_D} f(\rho)\,{\rm d}\mu=\sum_{j>k}\left(\int_{a_j\times a_k} f(\rho)\,{\rm d}\mu\right).
\end{equation}
Consider the orientation on the boundary $\partial D$ such that $D$ is on its left. Then the chord of $\gamma\in \mathcal{G}_D$ with respect to $D$ is on the left for each $a_j$. Consider a pair of edges $(a_j,a_k)$ with $j>k$. Choose and fix parametrizations of the geodesics associated to $a_j$ and $a_k$. Let $l_j$ and $l_k$ denote the parameters associated to $a_j$ and $a_k$ respectively. 
\begin{center}
 \includegraphics[scale=0.6]{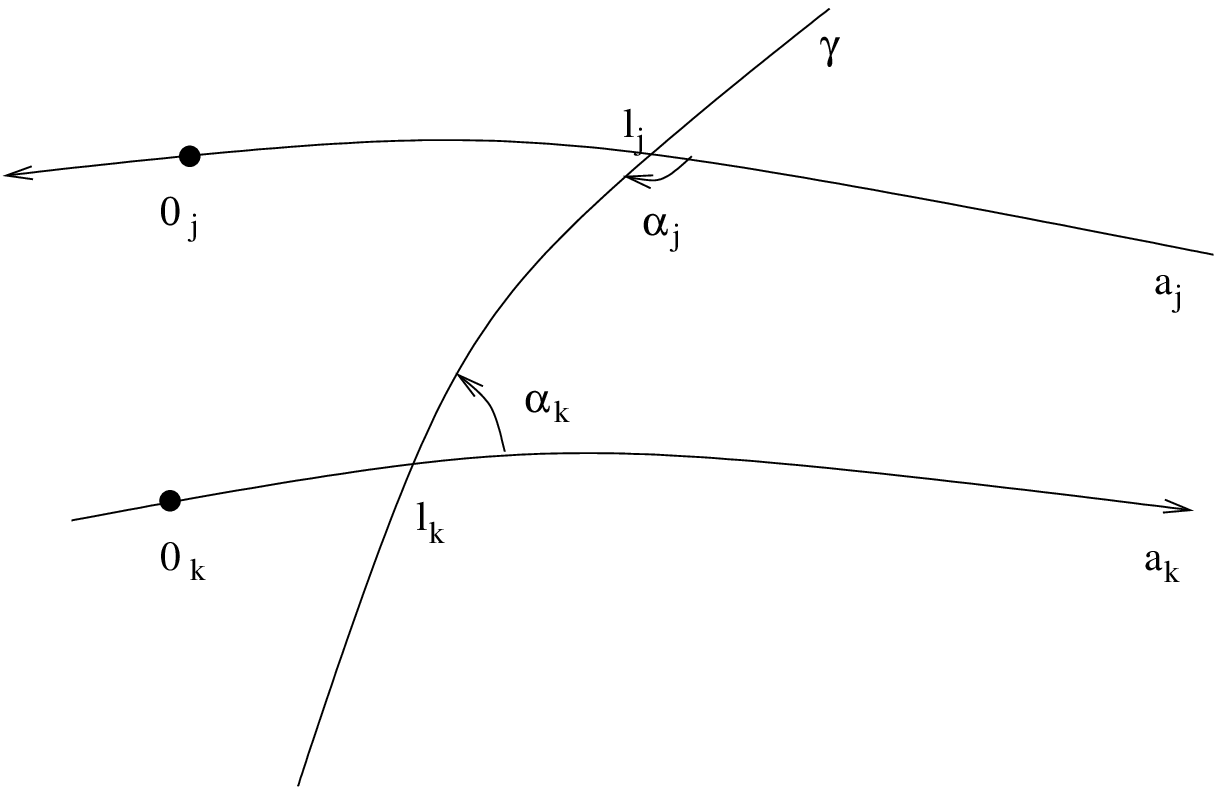}
\end{center}

The local expression of $\mu$ with respect to them is given by: 
\begin{equation}\label{p0}
{\rm d}\mu=\frac{\sin\alpha_j\sin\alpha_k}{2\sinh\rho} \,{\rm d}l_j\wedge {\rm d}l_k.
\end{equation}

By hyperbolic trigonometry, we have:
\begin{equation}\label{p1}
 \frac{\partial\rho}{\partial l_j}=\cos\alpha_j,
\end{equation}
and
\begin{equation}\label{p2}
 \frac{\partial\rho}{\partial l_k}=-\cos\alpha_k.
\end{equation}

Now, consider the following $1$-form:
\begin{displaymath}
\omega_{jk}=-\frac{\cos\alpha_j}{4} \,{\rm d}l_j-\frac{\cos\alpha_k}{4} \,{\rm d}l_k.
\end{displaymath}
which is well-defined on a rectangular domain in $\mathbb{R}^2$ where each point $(l_j,l_k)$ represents the geodesic intersecting $a_j$ (respectively $a_k$) at $l_j$ (respectively $l_k$).

By changing variables, we have that:
\begin{eqnarray*}
{\rm d}\omega_{jk}&=&\frac{\sin\alpha_j}{4} \,{\rm d}\alpha_j\wedge {\rm d}l_j+\frac{\sin\alpha_k}{4} \,{\rm d}\alpha_k\wedge {\rm d}l_k\\
&=&-\frac{\sin\alpha_j\sin\alpha_k}{4\sinh\rho} \,{\rm d}l_k\wedge {\rm d}l_j+\frac{\sin\alpha_j\sin\alpha_k}{4\sinh\rho} \,{\rm d}l_j\wedge {\rm d}l_k\\
&=&\frac{\sin\alpha_j\sin\alpha_k}{2\sinh\rho} \,{\rm d}l_j\wedge {\rm d}l_k\\
&=&{\rm d}\mu.
\end{eqnarray*}

We will compute the right hand side of (\ref{int}) term by term. For a pair of sides $(a_j,a_k)$ with $j>k$, by Stokes' formula one has:

\begin{equation}\label{stokes}
 \int_{\partial (a_j\times a_k)} f(\rho)\omega_{jk}=\int_{a_j\times a_k} f'(\rho)\,{\rm d}\rho\wedge\omega_{jk}+\int_{a_j\times a_k} f(\rho)\,{\rm d}\omega_{jk}.
\end{equation}

In the first term on the right hand side:
\begin{eqnarray*}
 {\rm d}\rho\wedge\omega_{jk}&=&(\cos\alpha_j \,{\rm d}l_j-\cos\alpha_k \,{\rm d}l_k)\wedge(-\frac{\cos\alpha_j}{4} \,{\rm d}l_j-\frac{\cos\alpha_k}{4} \,{\rm d}l_k)\\
&=&-\frac{\cos\alpha_j\cos\alpha_k}{2}\,{\rm d}l_j\wedge {\rm d}l_k.
\end{eqnarray*}

Comparing it with the Formula (\ref{p0}), we find the following relation:

\begin{eqnarray*}
 {\rm d}\rho\wedge\omega_{jk}&=&-\frac{\cos\alpha_j\cos\alpha_k}{2}\,{\rm d}l_j\wedge {\rm d}l_k\\
&=&-\frac{\cos\alpha_j\cos\alpha_k}{2}\frac{2\sinh\rho}{\sin\alpha_j\sin\alpha_k}\,{\rm d}\mu\\
&=&-\cot\alpha_j\cot\alpha_k\sinh\rho \,{\rm d}\mu.
\end{eqnarray*}

So this first term on the right hand side of (\ref{stokes}) becomes:
\begin{equation}\label{rightfirst}
 \int_{a_j\times a_k} f'(\rho)\,{\rm d}\rho\wedge\omega_{jk}=-\int_{a_j\times a_k} f'(\rho)\cot\alpha_j\cot\alpha_k\sinh\rho \,{\rm d}\mu.
\end{equation}

Now we turn to the left hand side of (\ref{stokes}). We need to discuss two cases depending on the relative positions between $a_j$ and $a_k$:

\textbf{(I)} The edges $a_j$ and $a_k$ are not adjacent (or equivalently $|j-k|\neq 1$ mod $n$).

We denote by $A_j$ and $B_j$ (respectively $A_k$ and $B_k$) the starting and end points of $a_j$ (respectively $a_k$). Then the left hand side of (\ref{stokes}) can be computed as following:

\begin{eqnarray}\label{cancel}
\int_{\partial (a_j\times a_k)} f(\rho)\omega_{jk}
&=&-\int^{(A_j,B_k)}_{(A_j,A_k)}f(\rho)\omega_{jk}+\int^{(B_j,B_k)}_{(B_j,A_k)}f(\rho)\omega_{jk}\nonumber\\
&&+\int^{(B_j,A_k)}_{(A_j,A_k)}f(\rho)\omega_{jk}-\int^{(B_j,B_k)}_{(A_j,B_k)}f(\rho)\omega_{jk}\nonumber\\
&=&\int^{(A_j,B_k)}_{(A_j,A_k)}f(\rho)\frac{\cos\alpha_k}{4}\,{\rm d}l_k-\int^{(B_j,B_k)}_{(B_j,A_k)}f(\rho)\frac{\cos\alpha_k}{4} \,{\rm d}l_k\nonumber\\
&&-\int^{(B_j,A_k)}_{(A_j,A_k)}f(\rho)\frac{\cos\alpha_j}{4}\,{\rm d}l_j+\int^{(A_j,B_k)}_{(A_j,A_k)}f(\rho)\frac{\cos\alpha_j}{4}\,{\rm d}l_j\nonumber\\
&=&\frac{1}{4}(-\int^{\rho_1}_{\rho_3}+\int^{\rho_2}_{\rho_4}-\int^{\rho_4}_{\rho_3}+\int^{\rho_2}_{\rho_1}) f(\rho)\,{\rm d}\rho.
\end{eqnarray}
where ${\rho_1}$ is the length of the diagonal $(A_j,B_k)$, ${\rho_2}$ is the length of the diagonal $(B_j,B_k)$, ${\rho_3}$ is the length of the diagonal $(A_j,A_k)$ and ${\rho_4}$ is the length of the diagonal $(B_j,A_k)$. The last equality comes from the change of variable using (\ref{p1}) and (\ref{p2}).

\textbf{(II)} The edges $a_j$ and $a_k$ are adjacent (or equivalently $|j-k|=1$ mod $n$).

Without loss of generality, we can assume that $B_j=A_k$. In the same way as in \textbf{(I)}, we get the following equality:

\begin{eqnarray*}
\int_{\partial (a_j\times a_k)} f(\rho)\omega =\frac{1}{4}(-\int^{\rho_1}_{\rho_3}+\int^{\rho_2}_{\rho_4}-\int^{\rho_4}_{\rho_3}+\int^{\rho_2}_{\rho_1}) f(\rho)\,{\rm d}\rho.
\end{eqnarray*}
Moreover we have the following relations:
\begin{eqnarray*}
&&\rho_2=|a_k|,\\
&&\rho_3=|a_j|,\\
&&\rho_4=0.
\end{eqnarray*}
So in this case we obtain the following formula:
\begin{equation}\label{+1}
 \int_{\partial (a_j\times a_k)} f(\rho)\omega_{jk}=\frac{1}{4}(-\int^{\rho_1}_{|a_j|}+\int^{|a_k|}_{0}+\int^{|a_j|}_{0}+\int^{|a_k|}_{\rho_1}) f(\rho)\,{\rm d}\rho.
\end{equation}

The last step is to sum up the formulas (\ref{stokes}) for each $(j,k)$. Let us first compute the left hand side:
\begin{equation*}
 \sum_{j>k}\int_{\partial (a_j\times a_k)} f(\rho)\omega =\sum_{j>k}\frac{1}{4}(-\int^{\rho_1}_{\rho_3}+\int^{\rho_2}_{\rho_4}-\int^{\rho_4}_{\rho_3}+\int^{\rho_2}_{\rho_1}) f(\rho)\,{\rm d}\rho
\end{equation*}

For any $(j,k)$ in Case \textbf{(I)}, the first term $$-\int^{\rho_1}_{\rho_3}f(\rho)\,{\rm d}\rho,$$ on the right hand side of the associated Formula (\ref{cancel}) will also appear on the right hand side of Formula (\ref{cancel}) associated to $(j-1,k)$, but with a different sign. The same happens for the other three terms for $(j,k)$. For any $(j,k)$ in Case \textbf{(II)}, this happens for the first and the last term in Formula (\ref{+1}). 

After summing up Formula (\ref{cancel}) and Formula (\ref{+1}) for all pairs $(j,k)$, the terms different by a sign will cancel each other and the rest is the following:
\begin{eqnarray}\label{left}
\sum_{j>k} \int_{\partial (a_j\times a_k)} f(\rho)\omega_{jk}=\frac{1}{2}\sum^n_{i=1}\int^{|a_i|}_0f(x)\,{\rm d}x.	
\end{eqnarray}

For the first term on the right hand side in the sum, by Formula (\ref{rightfirst}), we have:
\begin{eqnarray*}
 \sum_{j>k}\int_{a_j\times a_k} f'(\rho)\,{\rm d}\rho\wedge\omega_{jk}&=&\sum_{j>k}-\int_{a_j\times a_k} f'(\rho)\cot\alpha_j\cot\alpha_k\sinh\rho \,{\rm d}\mu\\
&=&-\int_{\mathcal{G}_D} f'(\rho)\cot\alpha_j\cot\alpha_k\sinh\rho \,{\rm d}\mu.
\end{eqnarray*}

By moving it to the left hand side, we finally get the formula in Theorem \ref{1}.
\end{proof}

\subsection{Proof of Theorem \ref{2}}
\begin{proof}
Given a convex compact domain $D$ with $C^1$ boundary in $\mathbb{H}$, we can choose $3$ points $b_1,b_2$ and $b_3$ in $\partial D$ and get a triangle $D_3$ inscribed into $D$. Then by Theorem \ref{1}, we have the Ambartzumian-Pleijel identity for $D_3$:
\begin{equation*}
 \int_{\mathcal{G}_{D_3}}(f\circ\rho_3)\,{\rm d}\mu=\int_{\mathcal{G}_{D_3}}(f'\circ\rho_3)\sinh\rho_3\cot\alpha_1\cot\alpha_2 \,{\rm d}\mu+\frac{1}{2}\sum^3_{i=1}\int^{|a_i|}_0f(x)\,{\rm d}x.
\end{equation*}
Each pair of the adjacent vertices $(b_j,b_{j+1})$ of $D_3$ separates $\partial D$ into two parts and exactly one of which containing no vertices of $D_3$. Denote this arc by $\gamma_j$. Then we consider a new set of points in $\partial D$ consisting all $b_j$ and the mid-point of $\gamma_j$ for all $j$. The corresponding inscribed polygon denoted by $D_{6}$ will gives us another Ambartzumian-Pleijel identity:
\begin{equation*}
 \int_{\mathcal{G}_{D_{6}}}(f\circ\rho_{6})\,{\rm d}\mu=\int_{\mathcal{G}_{D_{6}}}(f'\circ\rho_{6})\sinh\rho_{6}\cot\alpha_1\cot\alpha_2 \,{\rm d}\mu+\frac{1}{2}\sum^{6}_{i=1}\int^{|a_i|}_0f(x)\,{\rm d}x.
\end{equation*}

Repeating the above construction for $D_{6}$ and so on, we can get a sequence of polygons $D_{3n}$. As $D$ is compact convex with $C^1$ boundary and $f$ is $C^1$, the function $(f\circ\rho_{3n})\chi_{\mathcal{G}_{D_{3n}}}$ defined on $\mathcal{G}_\mathbb{H}$ uniformly converges to $(f\circ\rho)\chi_{\mathcal{G}_D}$ when $n$ does to $\infty$, where $\rho$ is the chord length function for $D$. Also the maximum of lengths of boundary segments for each $D_{3n}$ will go to $0$ when $n$ goes to $\infty$. By passing to the limit, we obtain the required formula.
\end{proof}

\section{Applications}
\subsection{The Liouville measure of $\mathcal{G}_D$}
The proof of Corollary \ref{c1} is direct.
\begin{proof}
We choose $f$ to be the constant map: $f(x)=1$. Since the derivative of $f$ is identically zero, the first term of the right hand side is $0$. The corollary follows.
\end{proof}
\subsection{Isoperimetric inequality}
The idea of the proof comes from that for the Euclidean case using the Pleijel identity given by Ambartzumian in \cite{amba}.

\begin{proof}
In the proof of Theorem \ref{1}, we obtained the following two formulas for a polygon domain $D$:
\begin{equation*}
\int_{\mathcal{G}_D}(f\circ\rho)\,{\rm d}\mu=\frac{1}{2}\int_{\mathcal{G}_D}(f'\circ\rho)\cos\alpha_1\cos\alpha_2\,{\rm d}l_1{\rm d}l_2+\frac{1}{2}\sum^n_{i=1}\int^{|a_i|}_0f(x)\,{\rm d}x,
\end{equation*}
\begin{equation*}
\int_{\mathcal{G}_D}(f\circ\rho)\,{\rm d}\mu=\frac{1}{2}\int_{\mathcal{G}_D}(f\circ\rho)\frac{\sin\alpha_1\sin\alpha_2}{\sinh\rho}\,{\rm d}l_1{\rm d}l_2.
\end{equation*}

By the same argument as in the proof of Theorem \ref{2}, we have the following two equalities for those $D$ with $C^1$ boundary:
\begin{equation}\label{ii1}
\int_{\mathcal{G}_D}(f\circ\rho)\,{\rm d}\mu=\frac{1}{2}\int_{\mathcal{G}_D}(f'\circ\rho)\cos\alpha_1\cos\alpha_2\,{\rm d}l_1{\rm d}l_2+\frac{1}{2}f(0)L(\partial D),
\end{equation}
\begin{equation}\label{ii2}
\int_{\mathcal{G}_D}(f\circ\rho)\,{\rm d}\mu=\frac{1}{2}\int_{\mathcal{G}_D}(f\circ\rho)\frac{\sin\alpha_1\sin\alpha_2}{\sinh\rho}\,{\rm d}l_1{\rm d}l_2.
\end{equation}

In (\ref{ii1}) we take $f$ to be $f(x)=x$ and in (\ref{ii2}) we take $f$ to be $f(x)=\sinh x$, which yield the following two equalities:
 
\begin{equation}\label{iii1}
\int_{\mathcal{G}_D}\rho\,{\rm d}\mu=\frac{1}{2}\int_{\mathcal{G}_D}\cos\alpha_1\cos\alpha_2\,{\rm d}l_1{\rm d}l_2,
\end{equation}
\begin{equation}\label{iii2}
\int_{\mathcal{G}_D}(\sinh\circ\rho)\,{\rm d}\mu=\frac{1}{2}\int_{\mathcal{G}_D}\sin\alpha_1\sin\alpha_2\,{\rm d}l_1{\rm d}l_2.
\end{equation}

Adding the two equalities above, on the right hand side we get the following:
\begin{eqnarray*}
&&\frac{1}{2}\int_{\mathcal{G}_D}(\cos\alpha_1\cos\alpha_2+\sin\alpha_1\sin\alpha_2)\,{\rm d}l_1{\rm d}l_2\\
&=&\frac{1}{2}\int_{\mathcal{G}_D}\cos(\alpha_1-\alpha_2)\,{\rm d}l_1{\rm d}l_2\\
&=&\frac{1}{4}\int_{\partial D}\int_{\partial D}(1-2\sin^2(\frac{\alpha_1-\alpha_2}{2}))\,{\rm d}l_1{\rm d}l_2\\
&=&\frac{1}{4}(L(\partial D))^2-\frac{1}{2}\int_{\partial D}\int_{\partial D}\sin^2(\frac{\alpha_1-\alpha_2}{2})\,{\rm d}l_1{\rm d}l_2,
\end{eqnarray*}
whilst on the left hand side, we have the following:
\begin{equation}\label{AA}
\int_{\mathcal{G}_D}(\rho+\sinh(\rho))\,{\rm d}\mu=\int_{\mathcal{G}_D}2\rho\,{\rm d}\mu+\int_{\mathcal{G}_D}(\sinh(\rho)-\rho)\,{\rm d}\mu.
\end{equation}

The first term on the right hand side of Formula (\ref{AA}) is the volume of the unit tangent bundle over $D$. So the integral equals $\pi A(D)$. To compute the second term on the right hand side of (\ref{AA}), we need to use the hyperbolic volume form that we found in the former sections: denote by $P_1$ and $P_2$ the points in $D$, then we have:
\begin{eqnarray}
 (A(D))^2&=&\int_D\int_D{\rm dVol}(P_2)\, {\rm dVol}(P_1)\\
&=&\int_D(\int_D\sinh r_1(P_2)\,{\rm d}r_1(P_2){\rm d}\theta_1(P_2)){\rm dVol}(P_1),
\end{eqnarray}
where $(r_1,\theta_1)$ are the polar coordinates of $P_2$ with respect to $P_1$.
\begin{center}
 \includegraphics[scale=0.4]{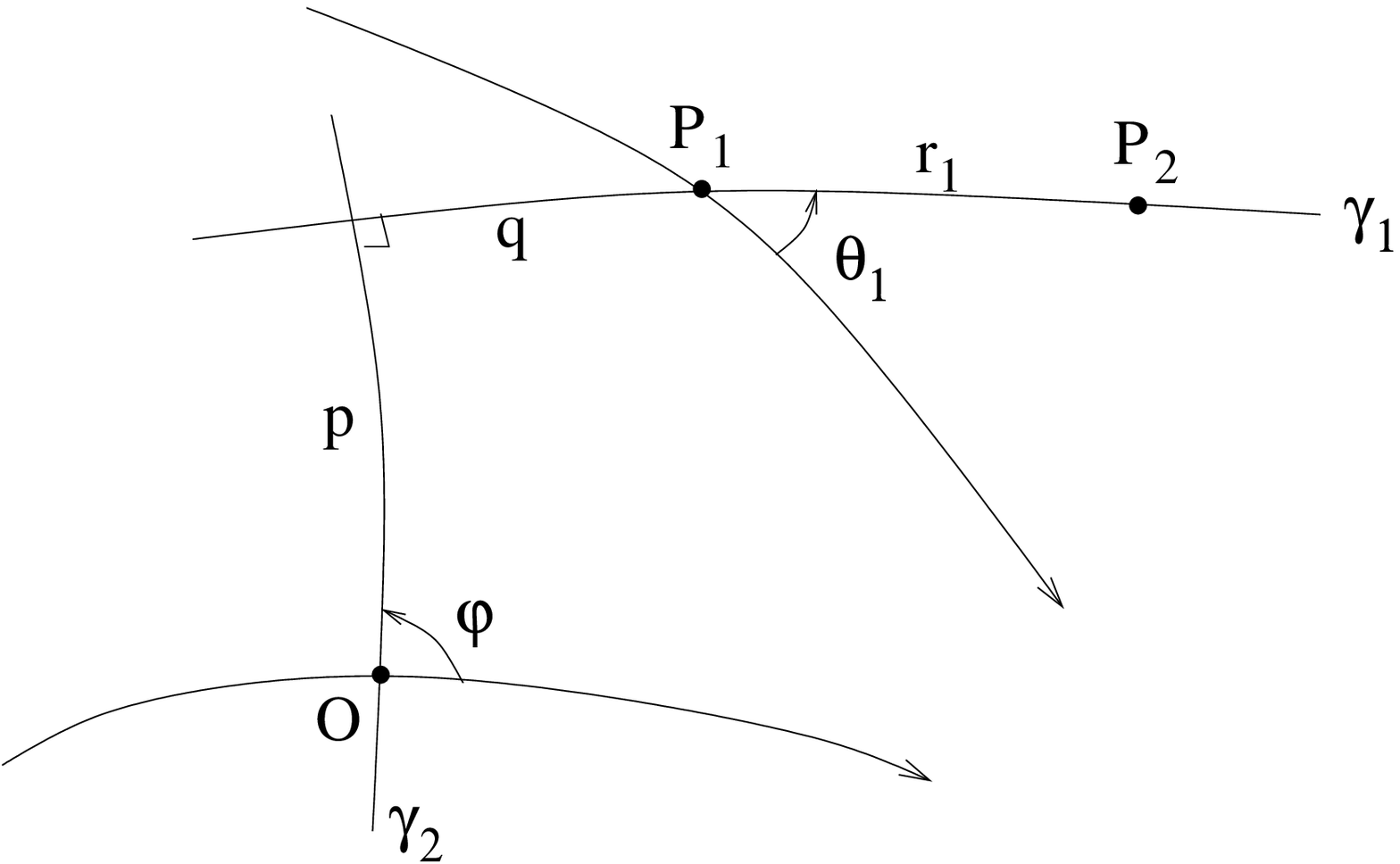}
\end{center}

Consider the geodesic $\gamma_1$ passing through the origin $i\in\mathbb{H}$ and orthogonal to the geodesic $\gamma_2$ passing through $P_1$ and $P_2$. It has an angle $\phi$ to a fixed geodesic ray based on $O$. We can change the parameter from $(r_1,\theta_1)$ to $(r_1,\phi)$ and the resulting formula is: 
\begin{equation}
(A(D))^2=\int_D\int_D\sinh r_1\frac{\cosh p}{\cosh q}\,{\rm d}r_1{\rm d}\phi{\rm dVol}(P_1).
\end{equation}
where $p$ is the distance from he origin to $\gamma_1$ and $q$ is the distance from the foot of orthogonal projection of the origin on $\gamma_2$ to $P_1$. Consider the rectangular coordinates of $P_1$ with respect to $\gamma_1$. Then we have the 
\begin{eqnarray*}
 \sinh r_1\frac{\cosh p}{\cosh q}\,{\rm d}r_1{\rm d}\phi{\rm dVol}(P_1)&=&\sinh r_1\frac{\cosh p}{\cosh q}\,{\rm d}r_1{\rm d}\phi\cosh q\,{\rm d}p{\rm d}q\\
&=&\sinh r_1\cosh p\,{\rm d}r_1{\rm d}\phi{\rm d}p{\rm d}q\\
&=&2\sinh r_1\,{\rm d}r_1{\rm d}q{\rm d}\mu.
\end{eqnarray*}
where the last equality comes from the formula (\ref{po}).

Consider a parametrization on geodesic $\gamma_2$. Let $p$ and $p'$ denote the positions of $P_1$ and $P_2$ respectively. Then we have $r=|p-p'|$. The above formula can be rewritten as following:
\begin{eqnarray}
 (A(D))^2&=&\int_{\mathcal{G}_D}\int_0^\rho\int_0^\rho2\sinh r_1\,{\rm d}q'{\rm d}q{\rm d}\mu\\
&=&\int_{\mathcal{G}_D}4(\sinh\rho-\rho)\,{\rm d}\mu.
\end{eqnarray}

By all the computations above, the sum of (\ref{iii1}) and (\ref{iii2}) can be rewritten as following:
\begin{equation}\label{ii}
 \frac{1}{4}(L(\partial D))^2-\frac{1}{2}\int_{\partial D}\int_{\partial D}\sin^2(\frac{\alpha_1-\alpha_2}{2})\,{\rm d}l_1{\rm d}l_2=\pi A(D)+\frac{1}{4}(A(D))^2.
\end{equation}
which implies the isoperimetric inequality. Also the formula (\ref{ii}) shows that the equality holds if and only if $\alpha_1=\alpha_2$ for all $\gamma\in\mathcal{G}_D$ which implies $\partial D$ is a circle.
\end{proof}
\begin{remark}
 The computation for $(A(D))^2$ is due to Santal\'o in \cite{santalo}.
\end{remark}

\subsection{Chord length distribution in an ideal triangle}
In this section we give a computation of the distribution of the chord length in an ideal triangle with respect to the Liouville measure. We remark that this result has been previously obtained by Bridgeman and Dumas in \cite{martin}. Here we give a different approach.

\begin{proof}
It is well known that all ideal triangles in $\mathbb{H}^2$ are isometric. We can assume that the vertices of the ideal triangle are $0$,$1$ and $\infty$. These points separate the boundary of hyperbolic plane into three intervals: $I_1=]\infty,0[$, $I_2=]0,1[$ and $I_3=]1,\infty[$. The set of the geodesics crossing the ideal triangle is $\bigcup\limits_{j<k}I_j\times I_k$. Because of the symmetry of the ideal triangle, we only need to consider $I_1\times I_3$ and we denote it by $\mathcal{G}_0$.

We parametrize $i\mathbb{R}$ such that the point $z_1$ with the coordinate $l_1$ is the point $ie^{l_1}$. In the same way, we parametrize $1+i\mathbb{R}$ such that the point $z_3$ with the coordinate $l_3$ is $1+ie^{l_3}$. Then a geodesic $\gamma\in\mathcal{G}_0$ can be parametrized by $(l_1,l_3)$. by the formula that we obtained in section 2, the Liouville measure can be expressed locally as following:
\begin{equation*}
d\mu=\frac{\sin\alpha_1\sin\alpha_3}{2\sinh\rho}\,{\rm d}l_1{\rm d}l_3,
\end{equation*}
where $\alpha_1$ (resp. $\alpha_2$) is the angle between $\gamma\in \mathcal{G}_0$ and $i\mathbb{R}$ (reps. $1+i\mathbb{R}$).

The chord length $\rho$ of $\gamma$ can be computed as following:
\begin{equation*}
\rho=\log\frac{|z_1-\bar {z_3}|+|z_1-z_3|}{|z_1-\bar {z_3}|-|z_1-z_3|}.
\end{equation*}
Equivalently we have:
\begin{eqnarray*}
\sinh\rho&=&\frac{|z_1-\bar {z_3}||z_1-z_3|}{2e^{l_1}e^{l_3}},\\
\cosh\rho&=&\frac{1+e^{2l_1}+e^{2l_3}}{2e^{l_1}e^{l_3}}.
\end{eqnarray*}

The center and the radius of $\gamma$ are the following:
\begin{equation*}
c=\frac{1+e^{2l_3}-e^{2l_1}}{2},
\end{equation*}
\begin{equation*}
r=\sinh\rho {e^{l_1}e^{l_3}}.
\end{equation*}

Then we can write the $\sin\alpha_1$ and $\sin\alpha_3$ as functions of $\rho,{l_1},{l_3}$:
\begin{equation*}
\sin\alpha_1=\frac{e^{l_1}}{r}=\frac{1}{\sinh\rho e^{l_3}},
\end{equation*}
\begin{equation*}
\sin\alpha_3=\frac{e^{l_3}}{r}=\frac{1}{\sinh\rho e^{l_1}}.
\end{equation*}

Then $d\mu$ is rewritten as following:
\begin{equation*}
d\mu=\frac{1}{\sinh^3\rho e^{l_1}e^{l_3}}\,{\rm d}{l_1}{\rm d}{l_3}.
\end{equation*}

By considering the equation:
\begin{equation*}
e^{2{l_3}}-2e^{l_1}e^{l_3}\cosh\rho+1+e^{2{l_1}}=0,
\end{equation*}
the parameter $k$ can be expressed by a function of $\rho$ and ${l_1}$:
\begin{equation*}
e^{l_3}=e^{l_1}\cosh\rho\pm\sqrt{e^{2{l_1}}\sinh^2\rho-1}.
\end{equation*}

These two solutions correspond to two different cases. Fix the point $z_1$ and move the point $z_3$ from $1$ to $\infty$ along the geodesic $1+i\mathbb{R}$. The chord length $\rho$ decreases from $\infty$ to a minimal value then increases back to $\infty$ where the minimal value is the distance between $z_1$ and the geodesic $1+i\mathbb{R}$. We denote this distance by $d(z_1)$. This means that in generic case for a fixed  $z_1$ and a fixed $\rho$, there are two points $z_3$ and $z_3'$ in $1+i\mathbb{R}$ satisfying that their distances to $z_1$ are both $\rho$. These two points correspond to the two solutions above respectively.

For a fixed $l_1$, to compute $l_3$ realizing $d(z_1)$, we consider the geodesic passing through $z_1$ and perpendicular to $1+i\mathbb{R}$. The center of this geodesic is $1$, and the foot of the perpendicular geodesic is $1+i\sqrt{1+e^{2l_1}}$ which implies that $e^{2{l_3}}=1+e^{2l_1}$. Then we can separate $\mathbb{R}^2$ into two parts:
\begin{equation*}
U_1=\{(l_1,l_3)\in\mathbb{R}^2\mid e^{2l_3}\ge1+e^{2l_1}\},
\end{equation*}
and
\begin{equation*}
U_2=\{(l_1,l_3)\in\mathbb{R}^2\mid e^{2l_3}\le1+e^{2l_1}\},
\end{equation*}
such that in each part, there is at most one $l_3$ for each pair $(l_1,\rho)$.

Now we begin to find which solution corresponds to which subset.

When $\rho=d(z_1)$, we have the equality $e^{l_3}=e^{l_1}\cosh d(z_1)=\sqrt{1+e^{2l_1}}$. Then as $l_3$ increasing, we have the inequality:
\begin{equation*}
 e^{l_3}>e^{l_1}\cosh d(z_1),
\end{equation*}
and $\rho$ increases at the same time. This tells us that the solution for the set $U_1$ is:
\begin{equation*}
e^{l_3}=e^{l_1}\cosh\rho+\sqrt{e^{2l_1}\sinh^2\rho-1},
\end{equation*}
Then the solution for $U_2$ is:
\begin{equation*}
e^{l_3}=e^{l_1}\cosh\rho-\sqrt{e^{2l_1}\sinh^2\rho-1}.
\end{equation*}

The integral splits into two parts:
\begin{equation*}
\int_{\mathcal{G}_0}f(\rho)\,{\rm d}\mu=\int_{-\infty}^{+\infty}\int_{\sqrt{1+e^{2l_1}}}^{+\infty}\frac{f(\rho)}{2e^{l_1}e^{l_3}\sinh^3\rho}\,{\rm d}l_3{\rm d}l_1+
\int_{-\infty}^{+\infty}\int^{\sqrt{1+e^{2l_1}}}_{-\infty}\frac{f(\rho)}{2e^{l_1}e^{l_3}\sinh^3\rho}\,{\rm d}l_3{\rm d}l_1
\end{equation*}
where $f:\mathbb{R}\rightarrow\mathbb{R}$ is $C^1$ and with compact support. We use $I$ and $II$ to denote the first term and the second term of the right hand side.

Recall that in $I$ we have
\begin{equation*}
e^{l_3}=e^{l_1}\cosh\rho+\sqrt{e^{2l_1}\sinh^2\rho-1}.
\end{equation*}
from which we compute the determinant of Jacobi as following:
\begin{eqnarray*}
|\frac{\partial l_3}{\partial\rho}|&=&\frac{1}{e^{l_3}}[e^{l_1}\sinh\rho+\frac{e^{2l_1}\sinh\rho\cosh\rho}{\sqrt{e^{2l_1}\sinh^2\rho-1}}]\\
&=&\frac{e^{l_1}\sinh\rho}{e^{l_3}\sqrt{e^{2l_1}\sinh^2\rho-1}}(\sqrt{e^{2l_1}\sinh^2\rho-1}+e^{l_1}\cosh\rho)\\
&=&\frac{e^{l_1}\sinh\rho}{e^{l_3}\sqrt{e^{2l_1}\sinh^2\rho-1}}e^{l_3}\\
&=&\frac{e^{l_1}\sinh\rho}{\sqrt{e^{2l_1}\sinh^2\rho-1}}.
\end{eqnarray*}

Then we have:
\begin{eqnarray*}
I&=&\int_{-\infty}^{+\infty}\int_{\sqrt{1+e^{2l_1}}}^{+\infty}\frac{f(\rho)}{2e^{l_1}e^{l_3}\sinh^3\rho}\,{\rm d}l_3{\rm d}l_1\\
&=&\int_{-\infty}^{+\infty}\int_{\sinh^{-1}(\frac{1}{e^{l_1}})}^{+\infty}\frac{f(\rho)}{2e^{l_1}e^{{l_3}}\sinh^3\rho}\frac{e^{l_1}\sinh\rho}{\sqrt{e^{2l_1}\sinh^2\rho-1}}\,{\rm d}\rho {\rm d}l_1\\
&=&\int_{-\infty}^{+\infty}\int_{\sinh^{-1}(\frac{1}{e^{l_1}})}^{+\infty}\frac{f(\rho)}{{2(\sqrt{e^{2l_1}\sinh^2\rho-1}+e^{l_1}\cosh\rho)}\sinh^2\rho\sqrt{e^{2l_1}\sinh^2\rho-1}}\,{\rm d}\rho {\rm d}l_1\\
&=&\int_{0}^{+\infty}\int_{-\ln\sinh\rho}^{+\infty}\frac{f(\rho)}{{2(\sqrt{e^{2l_1}\sinh^2\rho-1}+e^{l_1}\cosh\rho)}\sinh^2\rho\sqrt{e^{2l_1}\sinh^2\rho-1}}\,{\rm d}l_1{\rm d}\rho.
\end{eqnarray*}

Let
\begin{equation*}
 v=(\sinh\rho+\cosh\rho)e^{l_1}(\sinh\rho e^{l_1}+\sqrt{\sinh^2\rho e^{2l_1}-1}).
\end{equation*}

We change variables from $(l_1,\rho)$ to $(v,\rho)$ and the Jacobi is:
\begin{equation*}
 |J|=|\frac{\partial l_1}{\partial v}|=[(\sinh\rho+\cosh\rho)e^{2l_1}(\sinh\rho e^{l_1}+\sqrt{\sinh^2\rho e^{2l_1}-1})^2\frac{\sinh\rho}{\sqrt{\sinh^2\rho e^{2l_1}-1}}]^{-1}.
\end{equation*}

So the integral $I$ can be written as following:
\begin{eqnarray*}
 I&=&\int_{0}^{+\infty}\int_{\frac{\cosh\rho+\sinh\rho}{\sinh\rho}}^{+\infty}\frac{f(\rho)}{{2(\sqrt{e^{2l_1}\sinh^2\rho-1}+e^{l_1}\cosh\rho)}\sinh^2\rho\sqrt{e^{2l_1}\sinh^2\rho-1}}\times\\
 &&\times\frac{\sqrt{e^{2l_1}\sinh^2\rho-1}}{(\cosh\rho+\sinh\rho)e^{l_1}(\sinh\rho e^{l_1}+\sqrt{\sinh^2\rho e^{2l_1}-1})^2}\,{\rm d}v{\rm d}\rho\\
 &=&\int_{0}^{+\infty}\int_{\frac{\cosh\rho+\sinh\rho}{\sinh\rho}}^{+\infty}\frac{f(\rho)}{2\sinh^2\rho}\frac{1}{(\cosh\rho+\sinh\rho)e^{l_1}(\sinh\rho e^{l_1}+\sqrt{\sinh^2\rho e^{2l_1}-1})}\times\\
 &&\times\frac{1}{(\sqrt{e^{2l_1}\sinh^2\rho-1}+e^{l_1}\cosh\rho)(\sinh\rho e^{l_1}+\sqrt{\sinh^2\rho e^{2l_1}-1})}\,{\rm d}v{\rm d}\rho\\
 &=&\int_{0}^{+\infty}\int_{\frac{\cosh\rho+\sinh\rho}{\sinh\rho}}^{+\infty}\frac{f(\rho)}{2\sinh^2\rho v(v-1)}\,{\rm d}v{\rm d}\rho.
\end{eqnarray*}

We treat $II$ in a similar way. In this computation, we use:
\begin{equation*}
 v=(\sinh\rho+\cosh\rho)e^{l_1}(\sinh\rho e^{l_1}-\sqrt{\sinh^2\rho e^{2l_1}-1}),
\end{equation*}
to change variables.

Fix $\rho$ and compute the partial differential of $v$ with respect to $l_1$, we may find that it is always negative. We compute the limit of $v$ when $l$ goes to infinite as following:
\begin{eqnarray*}
 v&=&(\sinh\rho+\cosh\rho)e^{l_1}(\sinh\rho e^{l_1}-\sqrt{\sinh^2\rho e^{2l_1}-1})\\
  &=&(\sinh\rho+\cosh\rho)e^{l_1}(\sinh\rho e^{l_1}-\sinh\rho e^{l_1}(1-\frac{1}{2\sinh^2\rho e^{2l_1}}+\mathrm{o}(\frac{1}{\sinh^2\rho e^{2l_1}}))),{l_1\rightarrow+\infty}.
\end{eqnarray*}

This implies that:
\begin{equation*}
\lim_{l\rightarrow+\infty}v(\rho,l_1)=\frac{\sinh\rho+\cosh\rho}{2\sinh\rho},
\end{equation*}
for $\rho>0$ fixed.

Then the second part of the integral is:
\begin{equation*}
 II=\int_{0}^{+\infty}\int_{\frac{\cosh\rho+\sinh\rho}{2\sinh\rho}}^{\frac{\cosh\rho+\sinh\rho}{\sinh\rho}}\frac{f(\rho)}{2\sinh^2\rho v(v-1)}\,{\rm d}v{\rm d}\rho.
\end{equation*}

Putting $I$ and $II$ together:
\begin{eqnarray*}
 \int_{\mathcal{G}_0}f(\rho)d\mu&=&\int_{0}^{+\infty}\int_{\frac{\cosh\rho+\sinh\rho}{2\sinh\rho}}^{\infty}\frac{f(\rho)}{2\sinh^2\rho v(v-1)}\,{\rm d}v{\rm d}\rho\\
&=&\int_{0}^{+\infty}\frac{f(\rho)}{2\sinh^2\rho}(\int_{\frac{\cosh\rho+\sinh\rho}{2\sinh\rho}}^{\infty}\frac{1}{v(v-1)}\,{\rm d}v)\,{\rm d}\rho\\
&=&\int_{0}^{+\infty}\frac{f(\rho)}{2\sinh^2\rho}2\rho \,{\rm d}\rho\\
&=&\int_{0}^{+\infty}\frac{f(\rho)\rho}{\sinh^2\rho}\,{\rm d}\rho.
\end{eqnarray*}

This implies that:
\begin{displaymath}
  {\rm d}M_T=\frac{3\rho\, {\rm d}\rho}{\sinh^2\rho}.
 \end{displaymath}
which is same as described in \cite{martin}.
\end{proof}

\subsection{Chord length distribution in an ideal quadrilateral}
In this proof, the $\delta-$formalism is the main tool and this idea comes from \cite{gas} for the Euclidean version of the Pleijel identity.
\begin{proof}
Let $Q$ be an ideal quadrilateral. Let $\gamma_1,\gamma_2,\gamma_3$ and $\gamma_4$ denote its $4$ edges ordered counter-clockwise. The set of geodesics having at least one end at the vertices of $Q$ has the Liouville measure $0$. So we need only consider the following two types of geodesics intersecting $Q$: those intersecting two edges adjacent and those intersecting two opposite edges. The chord length distribution for the first type has been computed in the previous section and the main task in this section is to compute the chord length distribution for the second type.

Consider the geodesics $\gamma_1$ and $\gamma_3$. Let $\mathcal{G}_{13}$ denote the geodesic intersecting them. By using the $\delta-$formalism to the Pleijel identity, we can compute the Liouville measure of the following set:
\begin{equation*}
 \{\gamma\in\mathcal{G}_{13}:\rho(\gamma)\le\rho_0\}.
\end{equation*}

By considering the distance to $\gamma_3$, the geodesic $\gamma_1$ can be separated into three segments. The middle one consists of those points having distance to $\gamma_3$ smaller than $\rho_0$. The other two consists of all points having the distance to $\gamma_3$ strictly bigger than $\rho_0$. We choose one point in each of the latter two segments. Let $A_1$ and $B_1$ denote these two points then the geodesic segment $[A_1,B_1]$ contains the points in $\gamma_1$ having the distance to $\gamma_3$ smaller that $\rho_0$ and the distance of $A_1$ and $B_1$ to $\gamma_3$ are both bigger than $\rho_0$. By considering the distance to $\gamma_1$ and the same argument as for $\gamma_1$, we have two points $A_3$ and $B_3$ in $\gamma_3$ such that the geodesic segment $[A_3,B_3]$ contains the points in $\gamma_3$ having the distance to $\gamma_1$ smaller that $\rho_0$ and the distance of $A_3$ and $B_3$ to $\gamma_1$ are both bigger than $\rho_0$. 

We assume that the $A_1,B_1,A_3$ and $B_3$ are in the cyclic order in $\partial Q$. Let $\mathcal{G}_0$ be the set of geodesics intersecting $\gamma_1$ between $A_1$ and $B_1$ and intersecting $\gamma_3$ between $A_3$ and $B_3$. Then we have that:
\begin{equation*}
 \mu(\{\gamma\in\mathcal{G}_{13}:\rho(\gamma)\le\rho_0\})=\mu(\{\gamma\in\mathcal{G}_0:\rho(\gamma)\le\rho_0\}).
\end{equation*}
Then by using the Stokes' theorem and repeating the proof of Theorem \ref{1}, we have:
\begin{equation}\label{delta}
\int_{\mathcal{G}_0} f(\rho)\,{\rm d}\mu=\int_{\mathcal{G}_0} f'(\rho)\cot\alpha_1\cot\alpha_3\sinh\rho \,{\rm d}\mu+\frac{1}{4}(-\int^{\rho_1}_{\rho_3}+\int^{\rho_2}_{\rho_4}-\int^{\rho_4}_{\rho_3}+\int^{\rho_2}_{\rho_1}) f(\rho)\,{\rm d}\rho,
\end{equation}
where ${\rho_1}$ is the length of the diagonal $(A_3,B_1)$, ${\rho_2}$ is the length of the diagonal $(B_3,B_1)$, ${\rho_3}$ is the length of the diagonal $(A_3,A_1)$ and ${\rho_4}$ is the length of the diagonal $(B_3,A_1)$. By the assumptions for $A_1,B_1,A_3$ and $B_3$, we have that $\rho_1,\dots,\rho_4$ all bigger that $\rho_0$.

Instead of choosing $f$ to be $C^1$, we can formally choose $f$ to be the step function:
\begin{equation*}
 f(x) = \left\{ \begin{array}{ll}
1 & \textrm{if $x\le\rho_0$}\\
0 & \textrm{if $x>\rho_0$}
\end{array} \right..
\end{equation*}
Then its derivative $f'$ becomes a $\delta-$function:
\begin{equation*}
 f'(x)=\delta(\rho_0-x),
\end{equation*}
and the identity (\ref{delta}) becomes:
\begin{equation}
\mu(\{\gamma\in\mathcal{G}_{13}:\rho(\gamma)\le\rho_0\})=\int_{\mathcal{G}_D} \delta(\rho_0-\rho)\cot\alpha_1\cot\alpha_3\sinh\rho \,{\rm d}\mu.
\end{equation}

As $\gamma_1$ and $\gamma_3$ are disjoint, there is an unique geodesic $\gamma'$ orthogonal to both of them. By considering the mid point of the chord of $\gamma'$ as the origin and choose one oriented geodesic passing it, we can define the polar coordinates $(w,\eta)$ of $\mathcal{G}_{\mathbb{H}}$. Recall that under the polar coordinates, we have:
\begin{equation*}
 {\rm d}\mu=\frac{\cosh w}{2}\,{\rm d}w{\rm d}\eta.
\end{equation*}
Let $\gamma=(w,\eta)\in\mathcal{G}_0$. Let $\gamma_0$ be the geodesic passing the origin orthogonal to $\gamma$ at the point $z_0$. Let $z_1$ be the intersection point of $\gamma$ with $\gamma_1$ and $z_3$ be the intersection point of $\gamma$ with $\gamma_3$. Let $\rho_1$ denote the hyperbolic distance between $z_0$ and $z_1$ and $\rho_3$ denote the hyperbolic distance between $z_0$ and $z_3$. 
\begin{center}
 \includegraphics[scale=0.6]{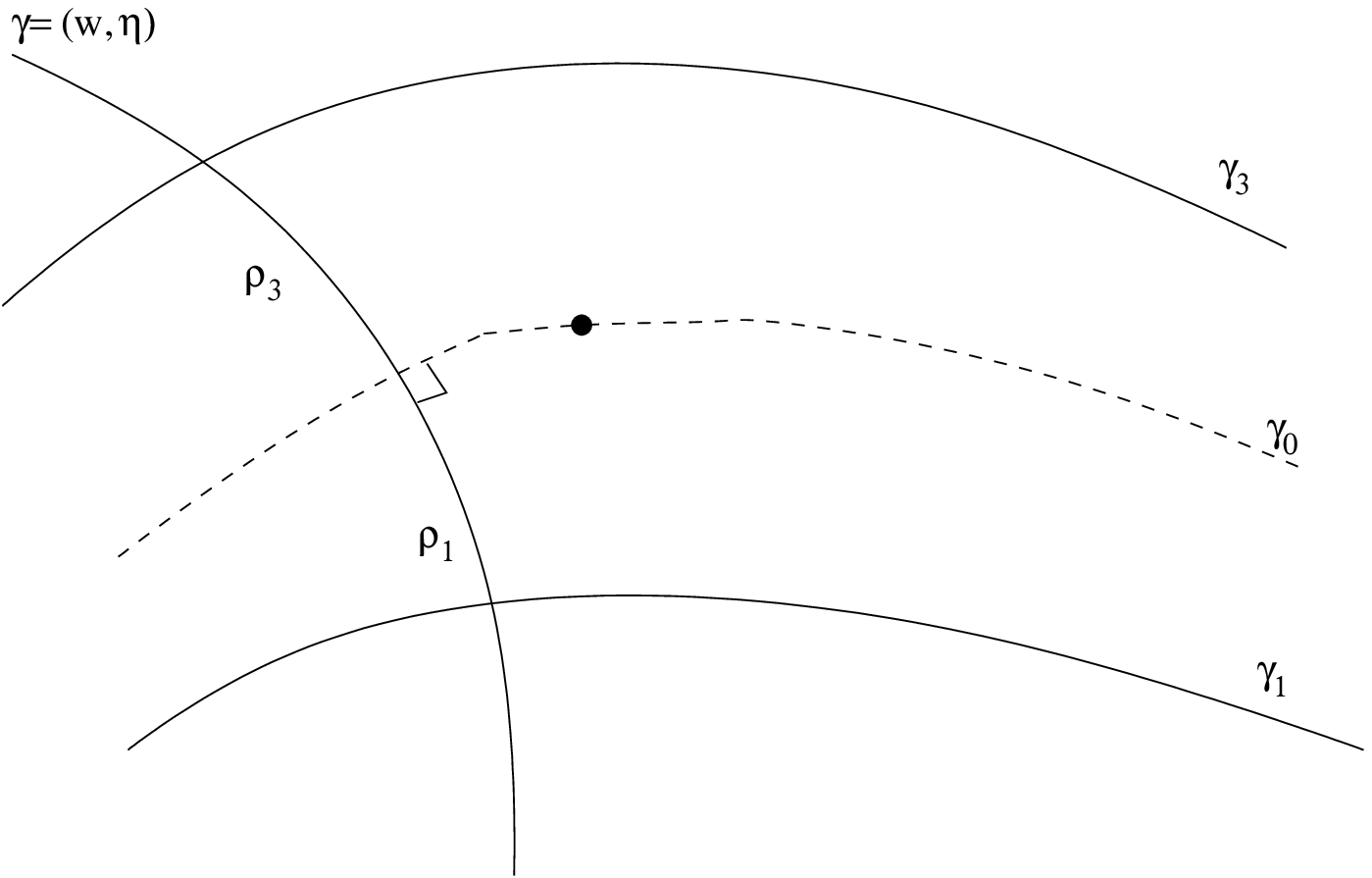}
\end{center}

Fix $\eta$. Then by hyperbolic trigonometry, we have:
\begin{equation*}
 {\rm d}w=-\frac{{\rm d}\rho}{\sinh\rho_1\cot\alpha_1+\sinh\rho_3\cot\alpha_3}.
\end{equation*}
By changing variables, we have:
\begin{equation*}
\mu(\{\gamma\in\mathcal{G}_{13}:\rho(\gamma)\le\rho_0\})=-\frac{1}{2}\int_{\mathcal{G}_D} \delta(\rho_0-\rho)\frac{\cot\alpha_1\cot\alpha_3\sinh\rho\cosh w}{\sinh\rho_1\cot\alpha_1+\sinh\rho_3\cot\alpha_3} \,{\rm d}\rho{\rm d}\eta.
\end{equation*}
which implies:
\begin{equation}
\mu(\{\gamma\in\mathcal{G}_{13}:\rho(\gamma)\le\rho_0\})=\frac{1}{2}\int_{[\eta]}\frac{\cot\alpha_1(\rho_0,\eta)\cot\alpha_3(\rho_0,\eta)\sinh\rho_0\cosh w(\rho_0,\eta)}{\sinh\rho_1(\rho_0,\eta)\cot\alpha_1(\rho_0,\eta)+\sinh\rho_3(\rho_0,\eta)\cot\alpha_3(\rho_0,\eta)} \,{\rm d}\eta.
\end{equation}
where $[\eta]$ is the set of $\eta$ such that there exists $\gamma\in\mathcal{G}_0$ with angle parameter $\eta$ and chord length $\rho_0$.

Let $M_{ij}$ be the chord length distribution with respect to $\gamma_i$ and $\gamma_j$. The above formula yields that:
\begin{equation*}
\int_0^{\rho_0}{\rm d}M_{13}=\frac{1}{2}\int_{[\eta]}\frac{\cot\alpha_1(\rho_0,\eta)\cot\alpha_3(\rho_0,\eta)\sinh\rho_0\cosh w(\rho_0,\eta)}{\sinh\rho_1(\rho_0,\eta)\cot\alpha_1(\rho_0,\eta)+\sinh\rho_3(\rho_0,\eta)\cot\alpha_3(\rho_0,\eta)} \,{\rm d}\eta.
\end{equation*}

By the same argument, we have that:
\begin{equation*}
\int_0^{\rho_0}{\rm d}M_{24}=\frac{1}{2}\int_{[\eta]}\frac{\cot\alpha_2(\rho_0,\eta)\cot\alpha_4(\rho_0,\eta)\sinh\rho_0\cosh w(\rho_0,\eta)}{\sinh\rho_2(\rho_0,\eta)\cot\alpha_2(\rho_0,\eta)+\sinh\rho_4(\rho_0,\eta)\cot\alpha_4(\rho_0,\eta)} \,{\rm d}\eta,
\end{equation*} 

The chord length distributions ${\rm d}M_{12}, {\rm d}M_{23},{\rm d}M_{34}$ and ${\rm d}M_{14}$ have been computed in the former section. Then the chord length distribution $M_Q=(\rho_Q)_*\mu$ for $Q$ is the following:
\begin{eqnarray*}
{\rm d}M_Q(\rho_0)&=&{\rm d}M_{12}({\rho_0})+{\rm d}M_{23}({\rho_0})+{\rm d}M_{34}({\rho_0})+{\rm d}M_{14}({\rho_0})+{\rm d}M_{13}({\rho_0})+{\rm d}M_{24}({\rho_0})\\
&=&\frac{12\rho_0{\rm d}\rho_0}{\sinh^2\rho_0}+{\rm d}M_{13}({\rho_0})+{\rm d}M_{24}({\rho_0}).
\end{eqnarray*}
where ${\rm d}M_{13}$ and ${\rm d}M_{24}$ are described as above. 
\end{proof}

\section{The Pleijel identity in general case}
From the proofs of the theorems and corollaries in this paper, we can see that the most important tool is the hyperbolic trigonometry. This inspires us that this method can be used to prove similar results for $\mathbb{X}_K$, the maximally symmetric, simply connected, $2-$dimensional Riemannian manifold with constant sectional curvature $K$. 

We recall the general sine function for $\mathbb{X}_K$:
\begin{equation}
 \sin_K(x)=x-\frac{Kx^3}{3!}+\frac{K^2x^5}{5!}-\cdots,
\end{equation}
or equivalently by the following formula:
\begin{displaymath}
\sin_K(x) = \left\{ \begin{array}{lll}
\frac{1}{\sqrt{K}}\sin \sqrt{K}x&, & \textrm{if $K>0$,}\\
x &,& \textrm{if $K=0$,}\\
\frac{1}{\sqrt{-K}}\sinh \sqrt{-K}x&, & \textrm{if $K<0$.}
\end{array} \right.
\end{displaymath}

We first give the general sines rules and the general cosine rules. Let $\Delta$ be a geodesic triangle in $\mathbb{X}_K$. Let $A$, $B$ and $C$ be its three angles, and let $a$, $b$ and $c$ be the three edges opposite to $A$, $B$ and $C$ respectively. Then we have:
\begin{eqnarray*}
&&\frac{\sin A}{\sin_K(a)}=\frac{\sin B}{\sin_K(b)}=\frac{\sin C}{\sin_K(c)},\\
&&\cos A=-\cos B\cos C+\sin B\sin C(\sin_K)'(a).
\end{eqnarray*}

Let $\mu_K$ to be the isometry invariant measure on the geodesics set $\mathcal{G}^K$ of $\mathbb{X}_K$. We use $1$ to normalize $\mu_K$ instead of $1/2$ for the Liouville measure in (\ref{c}). Repeat the proof of Theorem \ref{1} by replacing the hyperbolic sine rules by the general ones. Then we obtain the general Ambartzumian-Pleijel identity for $\mathbb{X}_K$:
\begin{theorem}
Let $D$ be a convex compact domain in $\mathbb{X}_K$ with geodesic polygon boundary. Let $f$, $\alpha_1$, $\alpha_2$ and $x$ be the same as in Theorem \ref{1}. Then we have the general Ambartzumian-Pleijel identity:
\begin{equation}
 \int_{\mathcal{G}^K_{D}}(f\circ\rho_K)\,{\rm d}\mu_K=\int_{\mathcal{G}^K_{D}}(f'\circ\rho_K)\sin_K(\rho_K)\cot\alpha_1\cot\alpha_2 \,{\rm d}\mu_K+\sum^n_{i=1}\int^{|a_i|}_0f(x)\,{\rm d}x.
\end{equation}
\end{theorem}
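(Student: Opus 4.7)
The plan is to transplant the proof of Theorem \ref{1} to $\mathbb{X}_K$, changing only the trigonometric pieces and the normalization constant. First, I would re-derive the local expression of $\mu_K$ in coordinates $(l_j,l_k)$ associated with two oriented geodesics $\gamma_j,\gamma_k$. Repeating the argument of the proposition on the density $F$ in Section~3.2 but with the general sine rule, one finds $\frac{\partial\alpha_j}{\partial l_k}=\sigma_j\,\sin\alpha_k/\sin_K(\rho)$ and similarly for the other partial. Imposing invariance as before forces $F(\alpha)=c\sin\alpha$, and with the paper's choice $c=1$ we obtain
\begin{equation*}
{\rm d}\mu_K=\frac{\sin\alpha_j\sin\alpha_k}{\sin_K(\rho)}\,{\rm d}l_j\wedge{\rm d}l_k,
\end{equation*}
which is the $\mathbb{X}_K$-analogue of (\ref{local}). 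The first-variation formulas $\partial\rho/\partial l_j=\cos\alpha_j$ and $\partial\rho/\partial l_k=-\cos\alpha_k$ are purely metric and carry over unchanged from (\ref{p1}) and (\ref{p2}).

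Next, following Cabo's argument as in the proof of Theorem \ref{1}, I would introduce the one-form
\begin{equation*}
\omega_{jk}=-\tfrac{1}{2}\cos\alpha_j\,{\rm d}l_j-\tfrac{1}{2}\cos\alpha_k\,{\rm d}l_k,
\end{equation*}
where the coefficient $1/2$ (instead of $1/4$ as in the hyperbolic case) reflects the new normalization $c=1$. A direct computation using the partial derivatives above then gives ${\rm d}\omega_{jk}={\rm d}\mu_K$, and
\begin{equation*}
{\rm d}\rho\wedge\omega_{jk}=-\cos\alpha_j\cos\alpha_k\,{\rm d}l_j\wedge{\rm d}l_k=-\cot\alpha_j\cot\alpha_k\sin_K(\rho)\,{\rm d}\mu_K.
\end{equation*}
Stokes' theorem on each rectangle $a_j\times a_k$ (with $j>k$) therefore yields
\begin{equation*}
\int_{\partial(a_j\times a_k)}f(\rho_K)\,\omega_{jk}=-\int_{a_j\times a_k}(f'\circ\rho_K)\cot\alpha_j\cot\alpha_k\sin_K(\rho_K)\,{\rm d}\mu_K+\int_{a_j\times a_k}(f\circ\rho_K)\,{\rm d}\mu_K.
\end{equation*}

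Finally, I would sum these identities over all pairs $j>k$. The left-hand side evaluates, by the same telescoping argument as in cases \textbf{(I)} and \textbf{(II)} of the proof of Theorem \ref{1}, to a sum of four single-variable integrals $\tfrac{1}{2}(-\int_{\rho_3}^{\rho_1}+\int_{\rho_4}^{\rho_2}-\int_{\rho_3}^{\rho_4}+\int_{\rho_1}^{\rho_2})f(\rho)\,{\rm d}\rho$ (the extra factor of $2$ coming from $1/2$ replacing $1/4$ in $\omega_{jk}$), and after global cancellation along all non-adjacent pairs only the adjacent contributions survive, giving $\sum_{i=1}^n\int_0^{|a_i|}f(x)\,{\rm d}x$. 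Rearranging produces the claimed identity. The only step requiring care is book-keeping the orientations and signs $\sigma_j$ of the partials, since for $K>0$ geodesics may intersect each other in more complicated configurations than in $\mathbb{H}$; but on a compact convex polygonal domain, the pairs of boundary edges are cut by each chord in a single pair of points, so the sign conventions of Section~3.2 carry over verbatim. This is the main obstacle to verify, after which the argument is formally identical to the hyperbolic proof.
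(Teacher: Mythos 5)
Your proposal is correct and follows essentially the same route as the paper, which itself only says to repeat the proof of Theorem \ref{1} with the general sine rule and the normalization $c=1$ in place of $c=1/2$; your bookkeeping of the resulting factor changes (the $1/2$ in $\omega_{jk}$ and the disappearance of the overall $1/2$ in the boundary sum) is exactly what that substitution produces. No further comparison is needed.
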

By the same argument as in Theorem \ref{2}, we have the $\mathbb{X}_K$ version of the Pleijel identity:
\begin{theorem}
Let $D$ be a convex compact domain in $\mathbb{X}_K$ with $C^1$ boundary. Let $f$, $\alpha_1$, $\alpha_2$ and $x$ be the same as above. Then we have the following identity:
\begin{equation}
 \int_{\mathcal{G}^K_{D}}(f\circ\rho_K)\,{\rm d}\mu_K=\int_{\mathcal{G}^K_{D}}(f'\circ\rho_K)\sin_K(\rho_K)\cot\alpha_1\cot\alpha_2 \,{\rm d}\mu_K+f(0)L_K(\partial D).
\end{equation}
Moreover if $f(0)=0$, we have the $\mathbb{X}_K$ version of the Pleijel identity.
\end{theorem}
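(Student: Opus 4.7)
The plan is to repeat almost verbatim the proof of Theorem \ref{2}, replacing $\sinh$ by $\sin_K$ and the hyperbolic trigonometric identities by their $\mathbb{X}_K$ counterparts, and invoking the general Ambartzumian--Pleijel identity (the preceding theorem) in place of Theorem \ref{1}. The only new content is a uniform-convergence argument adapted to the $\mathbb{X}_K$ setting.

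First, I would inscribe a triangle $D_3$ in $D$ by choosing three points $b_1,b_2,b_3\in\partial D$, and then iteratively double the vertex count by inserting the midpoint (measured along $\partial D$) of each boundary arc between consecutive inscribed vertices. This produces a sequence of inscribed geodesic polygons $D_{3\cdot 2^n}$ with $3\cdot 2^n$ edges, each of which is convex (for $K>0$ we tacitly work inside a convex ball so that all the geodesic segments between boundary points lie in $D$ and $\cot\alpha_i$ stays finite). Applying the general Ambartzumian--Pleijel identity to each $D_{3\cdot 2^n}$ gives
\begin{equation*}
\int_{\mathcal{G}^K_{D_{3\cdot 2^n}}}(f\circ\rho_{K,n})\,{\rm d}\mu_K=\int_{\mathcal{G}^K_{D_{3\cdot 2^n}}}(f'\circ\rho_{K,n})\sin_K(\rho_{K,n})\cot\alpha_1\cot\alpha_2 \,{\rm d}\mu_K+\sum^{3\cdot 2^n}_{i=1}\int^{|a_i^{(n)}|}_0f(x)\,{\rm d}x,
\end{equation*}
where $\rho_{K,n}$ denotes the chord length with respect to $D_{3\cdot 2^n}$ and $a_i^{(n)}$ its edges.

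Next, I would pass to the limit as $n\to\infty$. Because $\partial D$ is $C^1$ and $D$ is compact and convex, the polygons $D_{3\cdot 2^n}$ converge to $D$ in Hausdorff distance, $\max_i|a_i^{(n)}|\to 0$, and the angles between chords and the approximating edges converge to the corresponding angles with $\partial D$. Hence $(f\circ\rho_{K,n})\chi_{\mathcal{G}^K_{D_{3\cdot 2^n}}}$ and $(f'\circ\rho_{K,n})\sin_K(\rho_{K,n})\cot\alpha_1\cot\alpha_2\,\chi_{\mathcal{G}^K_{D_{3\cdot 2^n}}}$ converge uniformly on $\mathcal{G}^K$ to the analogous quantities for $D$. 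The first two integrals therefore converge to the corresponding integrals over $\mathcal{G}^K_D$. For the boundary sum, since $f\in C^1$ and $\max_i|a_i^{(n)}|\to 0$, a first-order Taylor estimate gives
\begin{equation*}
\sum_{i=1}^{3\cdot 2^n}\int_0^{|a_i^{(n)}|}f(x)\,{\rm d}x=f(0)\sum_{i=1}^{3\cdot 2^n}|a_i^{(n)}|+O\Bigl(\max_i|a_i^{(n)}|\cdot\sum_i|a_i^{(n)}|\Bigr),
\end{equation*}
and $\sum_i|a_i^{(n)}|\to L_K(\partial D)$ since $\partial D$ is $C^1$, so this term tends to $f(0)L_K(\partial D)$. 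Assembling these three convergences yields the claimed identity; if $f(0)=0$, the last term disappears and we recover the $\mathbb{X}_K$ Pleijel identity.

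The main obstacle to watch carefully is the integrability/boundedness of the middle integrand during the limit: the factor $\cot\alpha_1\cot\alpha_2$ blows up when a chord is nearly tangent to $\partial D$, and one must verify that this singularity is controlled uniformly in $n$. The standard way to handle this (and the one implicitly used for Theorem \ref{2}) is to note that $(f'\circ\rho_K)\sin_K(\rho_K)\cot\alpha_1\cot\alpha_2$ is expressed, via the local formula ${\rm d}\mu_K=\sin\alpha_1\sin\alpha_2/(2\sin_K\rho)\,{\rm d}l_1{\rm d}l_2$ (the $\mathbb{X}_K$ analogue of (\ref{local})), as $\tfrac12(f'\circ\rho_K)\cos\alpha_1\cos\alpha_2\,{\rm d}l_1{\rm d}l_2$, which is bounded by $\tfrac12\|f'\|_\infty$ and integrable against the $C^1$ boundary arc-length measure. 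With this reformulation the dominated convergence theorem applies uniformly in $n$ and the limiting argument goes through without further issues.
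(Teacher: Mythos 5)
Your proposal is correct and follows essentially the same route as the paper, which proves this theorem by simply repeating the polygonal-approximation argument of Theorem \ref{2} with the general Ambartzumian--Pleijel identity in place of Theorem \ref{1}. Your added details (the Taylor estimate for the boundary sum and the rewriting of the $\cot\alpha_1\cot\alpha_2$ integrand as $\tfrac12(f'\circ\rho_K)\cos\alpha_1\cos\alpha_2\,{\rm d}l_1{\rm d}l_2$ to control the near-tangent singularity) are sound and in fact supply steps the paper leaves implicit.
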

By taking $f$ to be the constant map, we can find that Corollary \ref{c1} still holds for $\mathbb{X}_K$ (without the factor $1/2$). The $\mathbb{X}_K$ version of Corollary \ref{c2} is slightly different from above. By replacing the function $\sinh x$ by $\sin_K(x)$ and $\cosh x$ by $(\sin_K)'(x)$ in the expressions of the volume form of the hyperbolic metric and the Liouville measure, we obtain for $\mathbb{X}_K$ the general isoperimetric inequality:
\begin{equation*} 
L(D)^2\ge4\pi A(D)-KA(D)^2.
\end{equation*}
\bibliographystyle{plain}
\bibliography{Pleijel}
\end{document}